\documentclass[article]{elsarticle}
\usepackage{mathrsfs}
\usepackage{amssymb}
\usepackage{times}
\usepackage{amscd}
\usepackage{amsthm}
\usepackage{amsfonts}
\usepackage{amsmath}
\usepackage{appendix}
\usepackage{relsize}
\usepackage{flafter}
\usepackage{graphicx}
\usepackage{geometry}
\usepackage{mathtools}
\usepackage{hyperref}
\usepackage{cleveref}
\usepackage{lineno,hyperref}
\newcommand*\patchAmsMathEnvironmentForLineno[1]{%
	\expandafter\let\csname old#1\expandafter\endcsname\csname #1\endcsname
	\expandafter\let\csname oldend#1\expandafter\endcsname\csname end#1\endcsname
	\renewenvironment{#1}%
	{\linenomath\csname old#1\endcsname}%
	{\csname oldend#1\endcsname\endlinenomath}}%
\newcommand*\patchBothAmsMathEnvironmentsForLineno[1]{%
	\patchAmsMathEnvironmentForLineno{#1}%
	\patchAmsMathEnvironmentForLineno{#1*}}%
\AtBeginDocument{%
	\patchBothAmsMathEnvironmentsForLineno{equation}%
	\patchBothAmsMathEnvironmentsForLineno{align}%
	\patchBothAmsMathEnvironmentsForLineno{flalign}%
	\patchBothAmsMathEnvironmentsForLineno{alignat}%
	\patchBothAmsMathEnvironmentsForLineno{gather}%
	\patchBothAmsMathEnvironmentsForLineno{multline}%
}

\newtheorem{example}{Example}[section]

\newtheorem{remark}{Remark}[section]

\newtheorem{thm}{Theorem}[section]
\newtheorem{lem}{Lemma}[section]
\newtheorem{cor}{Corollary}[section]
\newtheorem{prop}{Proposition}[section]
\journal{Linear Algebra and its Applications}
\begin{document}
	\begin{frontmatter}
		\title{A log-majorization inequality for normal matrices with applications to determinantal inequalities and geometric means}
		
		\author{Mohammad M. GHABRIES\corref{mycorrespondingauthor}}
		\cortext[mycorrespondingauthor]{Corresponding author}
		\ead[mahdi.ghabries@gmail.com]{mahdi.ghabries@gmail.com}

		\begin{abstract}
			We establish a log-majorization inequality comparing the eigenvalues of the interlaced product $Y^t X^*Y^{1-t}X$ with those of $X^*YX$, valid for every positive semi-definite $Y$ and every normal $X$, with the inequality reversing for $t \notin[0,1]$ when $Y$ is positive definite. This extends known Hermitian results to the strictly larger class of normal matrices, where normality is shown to be the exact structural hypothesis, not a technical convenience. A counterexample proves the result can fail without it. As applications, we settle a normal-matrix extension of a determinantal conjecture of Lin, proving $$\det(A^*A + |BA|^p) \le \det(AA^* + |A^*B^*|^p)$$ for arbitrary $A$, normal $B$ and $p \ge 0$, and we give a complete eigenvalue picture for products of weighted geometric means, sharpening and complementing a theorem of Hiai and Lin.
			\\
		\end{abstract}
		
		\begin{keyword}
			{Log-majorization; Eigenvalues; Convex function; Normal matrix; Hermitian matrix; Positive semi-definite matrix; Determinantal inequalities; Weighted geometric mean}
			\MSC [2020] 15A45, 15A60, 47A64
		\end{keyword}
		
	\end{frontmatter}
	
	\section{Introduction}
	
	Log-majorization has become an essential tool in matrix analysis, operator theory, and their applications. Its power lies in the ability to compare eigenvalues of matrix products, powers, and means in a way that respects the underlying order structure of positive semi-definite matrices. This framework has found widespread use in areas ranging from quantum information theory and diffusion tensor imaging to numerical analysis and control theory.\\
	
	The classical development of log-majorization in the context of matrix means was initiated by the seminal Ando–Hiai theorem \cite{AndoHiai}, which establishes a fundamental monotonicity property stating that the log-majorization inequality
	\[
	(A^p \#_\alpha B^p)^{1/p} \prec_{\log} (A^q \#_\alpha B^q)^{1/q}
	\]
	holds for all positive definite matrices $A,B$, $0 \le \alpha \le 1$, and $0 < q \le p$. This result has since been extended to various matrix means and products; see, for example, \cite{Fur, H} and the references therein. In a related direction, F. Hiai and M. Lin \cite{MHL} proved the log-majorization
	\begin{equation}
		\lambda((A\#_tB)(A\#_{1-t}B)) \prec_{\log} \lambda(AB), \qquad 0 \le t \le 1,
		\label{1}\end{equation}
	which compares the product of two complementary weighted means with the endpoint product $AB$. However, this result was further generalized (see \cite{GAMA2, LS}). Despite these developments, the known results remain confined to the eigenvalue setting for positive definite. In particular, a singular value analogue of \eqref{1} is still open for $t\notin [1/4,3/4]$.\\ 
	
	A parallel and equally active line of investigation concerns determinantal inequalities involving absolute values of matrix products. Audenaert \cite{Audenaert} initiated this direction by proving
	\[
	\det(A^2+|BA|) \leq \det(A^2+AB),\qquad A,B\geq 0,
	\]
	a determinantal inequality motivated by problems in diffusion tensor imaging. M. Lin \cite{Lin} subsequently generalized this to all powers $p\in[0,2]$ and proposed a conjecture comparing $|AB|^p$ with $|BA|^p$ that was eventually settled in the Hermitian setting \cite{GAM,GAMA}. These results, together with their refinements \cite{AG, MAM}, rely heavily on log-majorization techniques applied to Hermitian matrices.\\
	
	The present paper contributes to both of these research directions. Our main result is the log-majorization (Theorem \ref{thm:wlog-main}) that compares the eigenvalues of $Y^t X^* Y^{1-t} X$ with those of $X^* Y X$ when $Y$ is positive semi-definite and $X$ is normal:
	\[
	\lambda(Y^t X^* Y^{1-t} X) \prec_{\log} \lambda(X^* Y X), \qquad 0\le t\le 1.
	\]
	The inequality is reversed for $t\notin[0,1]$ when $Y$ is positive definite. This extends the Hermitian case studied in \cite{MAM, GAM}, where $X$ was required to be self-adjoint, to the strictly larger class of normal matrices. The proof is built on a simple but powerful observation: the convexity of the norm function
	\[
	t \mapsto \|Y^{t/2}X^*Y^{(1-t)/2}\|_\infty
	\]
	on the entire real line, together with the endpoint equality at $t=0$ and $t=1$. Normality of $X$ is not a technical convenience but the exact structural requirement that makes the two endpoints equal; it is the natural hypothesis for the problem.\\
	
	When $X$ is Hermitian, the convexity of this norm function, combined with the symmetry property on $\mathbb{R}$, yields a complete monotonicity picture showing that the function is minimized at $t=1/2$ and increases as $t$ moves away in either direction. This refines the endpoint inequality into a continuous scale of log-majorizations. A counterexample shows that this minimization at $t=1/2$ can fail for general normal matrices in dimensions $n\ge 3$, though it persists in dimension $n=2$.\\
	
	As a concrete illustration of the power of Theorem \ref{thm:wlog-main}, we settle a normal-matrix extension of Lin's conjecture. We prove that for arbitrary $A$ and normal $B$,
	\[
	\det(A^*A + |BA|^p) \leq \det(AA^* + |A^*B^*|^p), \qquad p\geq 0,
	\]
	which reduces to the known Hermitian result \cite{GAM} when $A$ is self-adjoint.\\
	
	As a second application, we specialize the monotonicity results of Section \ref{sec:main} to the setting of weighted geometric means. For positive definite $A,B$, we obtain a complete description of how the eigenvalues of the product $(A\#_tB)(A\#_{1-t}B)$ vary as the weight $t$ moves along the real line (Theorem \ref{thm:geom-monotonicity}). The balanced geometric mean product $(A\#B)^2$ emerges as the unique minimal element in the log-majorization order, while the endpoint product $AB$ serves as the upper bound for $0 \le t \le 1$. More precisely, we establish the following chain refining \eqref{1}
	\[
	\lambda((A\#B)^2) \prec_{\log} \lambda((A\#_tB)(A\#_{1-t}B)) \prec_{\log} \lambda(AB), \qquad 0 \le t \le 1,
	\]
	with the inequalities reversed for $t \notin [0,1]$:
	\[
	\lambda((A\#B)^2) \prec_{\log} \lambda(AB) \prec_{\log} \lambda((A\#_tB)(A\#_{1-t}B)).
	\]
	This complements\eqref{1} by identifying the exact position of the minimum and by providing the monotonicity that interpolates between the two extremes. Moreover, we obtain a further generalization of \eqref{1}: for all $t_1,t_2 \in [0,1]$, we establish
	\[
	\lambda((A\#_{t_1}B)(A\#_{t_2}B)) \prec_{\log} \lambda(A^{2-(t_1+t_2)}B^{t_1+t_2}).
	\]

	Throughout the paper, $\mathbb{M}_n$ denotes the algebra of $n\times n$ complex matrices whose identity matrix is denoted by $I_n$. A matrix $A\in\mathbb{M}_n$ is said to be positive semi-definite (respectively, positive definite), written $A\geq 0$ (respectively, $A>0$), if $x^*Ax\geq 0$ for every $x\in\mathbb{C}^n$ (respectively, $x^*Ax>0$ for every nonzero $x\in\mathbb{C}^n$), and $\mathbb{M}_n^+$ denotes the cone of positive semi-definite matrices. Recall that $B \in \mathbb{M}_n$ is normal if $B^*B = BB^*$; Hermitian matrices (for which $B^* = B$) are a special case, as are skew-Hermitian and unitary matrices. For $A\in\mathbb{M}_n$, the modulus of $A$ is the positive semi-definite matrix $\vert A\vert=(A^*A)^{1/2}$, whose eigenvalues are precisely the singular values of $A$, that is, $\sigma_i(A)=\lambda_i(|A|)$ for all $1\le i\le n$. The L\"owner order on $\mathbb{M}_n$ is defined for Hermitian matrices $A,B$ by $A \ge B$ if and only if $A-B \in \mathbb{M}_n^+$. We refer the reader to \cite{Bhatia, Zhang} for further background. \\
	
	Whenever the eigenvalues $\lambda_1(A),\ldots,\lambda_n(A)$ of a matrix $A$ are real, we shall assume that they are arranged in non-increasing order and we write $\lambda(A)=(\lambda_1(A),\dots,\lambda_n(A))^T$. For any matrices $A, B\in\mathbb{M}_n$ with non-negative eigenvalues, we say that the eigenvalues of $A$ are weakly log-majorized by that of B, denoted by $\lambda(A) \prec_{w\log} \lambda(B)$, if and only if  
	\[
	\prod_{i=1}^k \lambda_i(A)\leq \prod_{i=1}^k \lambda_i(B)\quad \text{for each}  \ \ 1\leq k\leq n.
	\]
	
	We also say $\lambda(A)$ is log-majorized by $\lambda(B)$, denoted by $\lambda(A)\prec_{\log} \lambda(B)$ if the inequalities hold for all $k=1,\ldots,n-1$, with equality for $k=n$.
	\vspace{5mm}
	
	Before presenting our main results, we assemble a few auxiliary lemmas that will serve as the technical backbone of our arguments. The first of these is an elementary convexity fact, stated below for completeness. Despite its simplicity, Lemma \ref{lem:convex-fact} is the engine of everything that follows: once the relevant norm function is shown to be convex on the whole real line with equal values at $0$ and $1$, all the log-majorization relations of this paper, together with their reversals and the monotonicity phenomena, will follow from it.
	
	\begin{lem}\label{lem:convex-fact}
		Let $f: \mathbb{R} \to (0,\infty)$ be convex on $\mathbb{R}$ with $f(0)=f(1)$.
		Then
		\[
		f(t) \le f(0) \le f(s)
		\]
		for all $0 \le t \le 1$ and $s \notin [0,1]$.
	\end{lem}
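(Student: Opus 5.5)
The plan is to prove the two inequalities separately, each straight from the definition of convexity. No earlier result is needed, and positivity of $f$ plays no role.

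\emph{First inequality.} Take $0\le t\le 1$ and write $t=(1-t)\cdot 0+t\cdot 1$ as a convex combination of $0$ and $1$. Convexity then gives
\[
f(t)\le (1-t)f(0)+t f(1)=f(0),
\]
where the last step uses $f(0)=f(1)$.

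\emph{Second inequality.} Take $s\notin[0,1]$. The idea is to write one of the endpoints $0$ or $1$ as a convex combination of $s$ and the other endpoint.

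If $s>1$, then $1$ lies between $0$ and $s$. Explicitly, $1=(1-\tfrac1s)\cdot 0+\tfrac1s\cdot s$ with $\tfrac1s\in(0,1)$. Convexity yields
\[
f(1)\le (1-\tfrac1s)f(0)+\tfrac1s f(s).
\]
Substituting $f(1)=f(0)$ and rearranging gives $\tfrac1s f(0)\le \tfrac1s f(s)$, that is, $f(0)\le f(s)$.

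If $s<0$, then $0$ lies between $s$ and $1$. Set $\mu=\tfrac{1}{1-s}\in(0,1)$, so that $0=\mu s+(1-\mu)\cdot 1$. Convexity gives $f(0)\le \mu f(s)+(1-\mu)f(1)$. Using $f(1)=f(0)$, this simplifies to $\mu f(0)\le\mu f(s)$, hence $f(0)\le f(s)$.

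\emph{Main point to check.} There is no real obstacle here. The only thing needing care is choosing the correct convex-combination weights in the two exterior cases, and confirming that those weights lie strictly in $(0,1)$, which is what permits division by them.
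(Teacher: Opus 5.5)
Your proposal is correct and follows the paper's proof essentially verbatim: the same convex-combination argument on $[0,1]$, and the same decomposition $1=\tfrac1s\, s+(1-\tfrac1s)\cdot 0$ for $s>1$. The only difference is that you write out the $s<0$ case explicitly via $0=\mu s+(1-\mu)\cdot 1$ with $\mu=\tfrac{1}{1-s}$, which the paper leaves as ``analogous.''
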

	
	\begin{proof}
		For $0 \le t \le 1$, convexity gives
		\[
		f(t) \le (1-t)f(0) + t f(1) = f(0).
		\]
		
		For $s > 1$, write $1 = \frac{1}{s} s + \left(1-\frac{1}{s}\right)0$. 
		Convexity on $\mathbb{R}$ yields
		\[
		f(1) \le \frac{1}{s} f(s) + \left(1-\frac{1}{s}\right) f(0).
		\]
		Since $f(0)=f(1)$, rearranging gives $f(s) \ge f(0)$. 
		The case $s < 0$ is analogous.
	\end{proof}
	
	The next lemma connects log-majorization with determinantal inequalities and can be found in \cite[(P2)]{Lin}. It will serve as the bridge from eigenvalues log-majorization relations to the determinantal result of Section \ref{sec:det}.
	
	\begin{lem}\label{lem:log-det}
		Let $X, Y \in M_{n}$ with non-negative eigenvalues. If $\lambda(X)\prec_{w \log} \lambda(Y),$ then 
		\[
		\det(I_n + X) \leq \det(I_n + Y).
		\]
	\end{lem}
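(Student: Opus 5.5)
Since the entries of $\lambda(X)$ and $\lambda(Y)$ are non-negative, I would write $x_i=\lambda_i(X)$ and $y_i=\lambda_i(Y)$ in non-increasing order. Then
\[
\det(I_n+X)=\prod_{i=1}^n(1+x_i),\qquad \det(I_n+Y)=\prod_{i=1}^n(1+y_i),
\]
because the determinant is the product of the eigenvalues counted with algebraic multiplicity. This holds even though $X,Y$ need not be Hermitian, since the eigenvalues of $I_n+X$ are $1+x_i$. So the statement reduces to a scalar inequality: if $x\prec_{w\log} y$ for non-negative vectors, then $\prod(1+x_i)\le\prod(1+y_i)$.

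For the scalar inequality, suppose first that all $x_i>0$. Set $a_i=\log x_i$ and $b_i=\log y_i$; the hypothesis $\prod_{i\le k}x_i\le\prod_{i\le k}y_i$ then says $a\prec_w b$, i.e. $a$ is weakly majorized by $b$. The function $g(s)=\log(1+e^s)$ is convex and increasing on $\mathbb{R}$. The standard fact (Tomić--Weyl, or Karamata for weak majorization with increasing convex functions; see \cite{Bhatia}) then gives $\sum g(a_i)\le\sum g(b_i)$. Exponentiating yields $\prod(1+x_i)\le\prod(1+y_i)$.

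The only delicate step is handling zero entries, where $\log$ is undefined. Suppose $x_m>0=x_{m+1}=\dots=x_n$. The weak log-majorization for $k\le m$ forces $y_1,\dots,y_m>0$, since each partial product $\prod_{i\le k}y_i$ must be at least the positive number $\prod_{i\le k}x_i$. Applying the argument above to the first $m$ coordinates gives $\prod_{i\le m}(1+x_i)\le\prod_{i\le m}(1+y_i)$. For the remaining indices, $1+x_i=1\le 1+y_i$, which completes the proof. Alternatively, one could perturb $x_i$ to $\max(x_i,\varepsilon)$ and let $\varepsilon\to0$, but the truncation argument is cleaner.
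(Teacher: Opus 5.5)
Your proof is correct. The paper gives no argument of its own for this lemma; it only quotes it from \cite[(P2)]{Lin}. Your route is the standard justification: reduce to $\prod_i(1+x_i)\le\prod_i(1+y_i)$, apply Tomić--Weyl to the logarithms with the convex increasing function $s\mapsto\log(1+e^s)$, and handle zero eigenvalues by truncation.

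Discard the parenthetical alternative, though. Replacing only $x_i$ by $\max(x_i,\varepsilon)$ can destroy the hypothesis when $Y$ has zero eigenvalues (e.g.\ $\lambda(X)=\lambda(Y)=(1,0)$), so both vectors would have to be perturbed. Your truncation argument makes this unnecessary anyway.
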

	
	The following elementary algebraic identity constitutes one of the building blocks of our main results whose proof can be found in \cite{GAM}. It is a slight generalization of \cite[Lemma A, p. 129]{Tak}.
	
	\begin{lem}\label{lem:algebraic}
		Let $X$ and $Y$ be two invertible matrices. Then, for all $t \in \mathbb{R}$, 
		\[
		(X^*Y^*YX)^{t} = X^*Y^*(YXX^*Y^*)^{t - 1}YX.
		\]
	\end{lem}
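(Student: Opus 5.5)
The plan is to set $M = YX$, note that $M$ is invertible because $X$ and $Y$ are, and reduce the claim to the standard identity for real powers of $M^*M$ and $MM^*$. Put $P = M^*M = X^*Y^*YX$ and $Q = MM^* = YXX^*Y^*$. Both $P$ and $Q$ are positive definite, so every real power $P^t$ and $Q^{t-1}$ is defined by the functional calculus. The statement then reads $P^t = M^* Q^{t-1} M$.

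The key step is the intertwining relation
\[
M\, g(P) = g(Q)\, M
\]
for every function $g$ defined on $(0,\infty)$. For polynomials it follows from $M P^k = (MM^*)^k M = Q^k M$. For a general $g$, choose a polynomial $p$ that interpolates $g$ on the finite set $\sigma(P)\cup\sigma(Q)$; in fact $\sigma(P)=\sigma(Q)$ because $M$ is invertible. Then $g(P) = p(P)$ and $g(Q) = p(Q)$, since both matrices are diagonalizable, so the polynomial case gives the general one. Alternatively, one can use the polar decomposition $M = U|M|$ with $U$ unitary, which gives $Q = UPU^*$ and hence $g(Q) = U g(P) U^*$ and $g(Q)M = U g(P)|M| = U|M| g(P) = M g(P)$.

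With the intertwining relation in hand, take $g(s) = s^{t-1}$. This gives
\[
M^* Q^{t-1} M = M^* M P^{t-1} = P\,P^{t-1} = P^t,
\]
which is the claim.

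There is no serious obstacle. The only point needing care is that real powers of the invertible positive matrices $P$ and $Q$ are well defined and commute with $P$ and $Q$ respectively. That is why invertibility of $X$ and $Y$ is assumed: it makes negative powers meaningful, in particular when $t<1$.
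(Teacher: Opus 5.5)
Your proof is correct. The paper does not prove this lemma itself: it defers to \cite{GAM} and to Lemma A of \cite{Tak}. The standard argument for that lemma is your second route. Write $YX = U|YX|$ with $U$ unitary, so that $YXX^*Y^* = U(X^*Y^*YX)U^*$ and hence $(YXX^*Y^*)^{t-1} = U(X^*Y^*YX)^{t-1}U^*$. Then
\[
X^*Y^*(YXX^*Y^*)^{t-1}YX = |YX|\,|YX|^{2(t-1)}\,|YX| = |YX|^{2t} = (X^*Y^*YX)^t .
\]

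Your main route is different. You derive the intertwining $Mg(M^*M) = g(MM^*)M$ from the purely algebraic relation $MP^k = Q^kM$ plus interpolation on the spectrum, so you never need a unitary polar factor. The polar route is slightly shorter for this particular identity and makes the unitary equivalence $Q = UPU^*$ explicit. Both arguments give the intertwining for arbitrary $g$, and either is complete.

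One small correction: $\sigma(M^*M) = \sigma(MM^*)$ holds for every square $M$, so invertibility is not what gives it, and you interpolate on the union anyway. Invertibility, in fact only of the product $YX$, is needed solely so that $P$ and $Q$ are positive definite and $s \mapsto s^{t-1}$ is defined on their spectra.
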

	
	For the reader's convenience, we add the final lemma that states the well-known H\"older-type inequality for unitarily invariant norms (see, for example \cite{Bhatia, Kit}): 
	
	\begin{lem}\label{lem:holder}
		Let $A, B \in \mathbb{M}_n^+$ be positive semi-definite and let $X \in \mathbb{M}_n$. Then for any unitarily invariant norm $|||\cdot|||$ and any $\alpha \in [0,1]$,
		\[
		|||A^\alpha X B^{1-\alpha}||| \leq |||A X|||^\alpha \cdot |||X B|||^{1-\alpha}.
		\]
	\end{lem}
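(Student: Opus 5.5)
The plan is to prove the inequality first for the operator norm by complex interpolation, lift it to products of singular values with antisymmetric tensor powers, and then pass to an arbitrary unitarily invariant norm through Fan dominance and a Hölder inequality for symmetric gauge functions. We first reduce to $A,B>0$. Replace $A,B$ by $A+\varepsilon I_n$, $B+\varepsilon I_n$. Both sides of the inequality are continuous in $\varepsilon\ge 0$: for $\alpha\in(0,1)$ the map $A\mapsto A^\alpha$ is continuous on $\mathbb{M}_n^+$, and the cases $\alpha\in\{0,1\}$ are trivial. So we may let $\varepsilon\to 0$ at the end.

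\textbf{Step 1 (operator norm).} With $A,B>0$, consider the entire matrix function $F(z)=A^{z}XB^{1-z}$ on the strip $0\le \operatorname{Re} z\le 1$. It is bounded there, because $\|A^{x+iy}\|_\infty=\|A^x\|_\infty$ and similarly for $B$. On the line $\operatorname{Re} z=0$ we have $F(iy)=A^{iy}(XB)B^{-iy}$ with $A^{iy}$ and $B^{-iy}$ unitary, so $\|F(iy)\|_\infty=\|XB\|_\infty$. On the line $\operatorname{Re} z=1$ we similarly get $\|F(1+iy)\|_\infty=\|AX\|_\infty$. Apply Hadamard's three lines theorem to the scalar functions $u^*F(z)v$ with unit vectors $u,v$. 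This gives $\|A^\alpha XB^{1-\alpha}\|_\infty\le \|AX\|_\infty^\alpha\|XB\|_\infty^{1-\alpha}$.

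\textbf{Step 2 (weak log-majorization of singular values).} Apply Step 1 to the $k$-th antisymmetric tensor powers $\wedge^kA$, $\wedge^kB$, $\wedge^kX$, using $\wedge^k(A^\alpha)=(\wedge^kA)^\alpha$ and multiplicativity of $\wedge^k$. Since $\|\wedge^k Z\|_\infty=\prod_{j\le k}\sigma_j(Z)$, we obtain for every $k$
\[
\prod_{j=1}^k\sigma_j(A^\alpha XB^{1-\alpha})\le \prod_{j=1}^k \sigma_j(AX)^\alpha\sigma_j(XB)^{1-\alpha}.
\]
Put $w_j=\sigma_j(AX)^\alpha\sigma_j(XB)^{1-\alpha}$; this vector is already non-increasing. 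The display says $\sigma(A^\alpha XB^{1-\alpha})\prec_{w\log} w$, and weak log-majorization implies weak majorization. By Fan dominance it follows that $|||A^\alpha XB^{1-\alpha}|||\le \Phi(w)$, where $\Phi$ is the symmetric gauge function of $|||\cdot|||$.

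\textbf{Step 3 (Hölder for gauge functions).} Write $a=\sigma(AX)$ and $b=\sigma(XB)$. It remains to show $\Phi(a^\alpha b^{1-\alpha})\le\Phi(a)^\alpha\Phi(b)^{1-\alpha}$ for non-negative vectors, with powers taken entrywise. If $\Phi(a)=0$ or $\Phi(b)=0$ the right side is $0$ and so is $w$, so assume both are positive. Both sides are homogeneous of degree $\alpha$ in $a$ and of degree $1-\alpha$ in $b$, so we may normalize $\Phi(a)=\Phi(b)=1$. The entrywise weighted AM--GM inequality gives $a^\alpha b^{1-\alpha}\le \alpha a+(1-\alpha)b$. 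A symmetric gauge function is monotone on non-negative vectors, so together with the triangle inequality we get $\Phi(a^\alpha b^{1-\alpha})\le \alpha\Phi(a)+(1-\alpha)\Phi(b)=1$. Combining this with Step 2 proves the lemma.

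The main obstacle is Step 1. One has to be careful that the three lines theorem applies, which needs $F$ bounded and continuous on the closed strip, and this is exactly why we first perturb to positive definite $A,B$. The remaining steps are standard majorization bookkeeping.
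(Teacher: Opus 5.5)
Your proof is correct and complete. The paper gives no proof of this lemma; it quotes it as well known, citing Bhatia's book and Kittaneh. So the right comparison is with the standard argument in that literature, and your outline follows it: the operator-norm case, then lifting to $\prod_{j\le k}\sigma_j$ through $\wedge^k$, then weak log-majorization $\Rightarrow$ weak majorization, then Fan dominance, and finally the H\"older inequality $\Phi(a^\alpha b^{1-\alpha})\le\Phi(a)^\alpha\Phi(b)^{1-\alpha}$ for symmetric gauge functions. You prove that last inequality cleanly by normalization and entrywise weighted AM--GM.

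The one place you choose a different tool is the base case. You apply Hadamard's three lines theorem to $z\mapsto A^zXB^{1-z}$, and this is what forces your preliminary perturbation to $A,B>0$. A purely real-variable alternative starts from the midpoint estimate $\|A^{1/2}XB^{1/2}\|_\infty^2=\rho(X^*A\cdot XB)\le\|AX\|_\infty\|XB\|_\infty$. Applying it with $A^{t_1-t_2}$, $A^{t_2}XB^{1-t_1}$, $B^{t_1-t_2}$ in place of $A,X,B$ gives midpoint log-convexity of $t\mapsto\|A^tXB^{1-t}\|_\infty$ on $[0,1]$. Dyadic iteration and continuity on $(0,1)$ then finish the argument. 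That route needs neither invertibility nor complex analysis.

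Your interpolation argument, on the other hand, handles every $\alpha$ at once. Also, for $A,B>0$ the strip can be any $\{t_2\le\operatorname{Re}z\le t_1\}$ with $t_2<t_1$ real. Combined with your Steps 2--3, this directly yields the log-convexity on all of $\mathbb{R}$ that the paper later extracts from this lemma in Proposition~\ref{prop:convexity-R}.
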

	
	With these preliminaries in place, we now turn to our main results.
	
	\section{Main results}\label{sec:main}
	
	We begin with the convexity property that underlies all of our results. The starting point is an extension of the classical convexity property of the norm function $t\mapsto|||A^t X B^{1-t}|||$ from the unit interval to the entire real line. While this function is well known to be convex on $[0,1]$ for positive semi-definite $A,B$, as proved by M. Sababheh \cite{Sab}, the restriction to $[0,1]$ is dictated by the fact that negative powers of singular matrices are undefined. Our first observation is that when $A$ and $B$ are positive definite, nothing is lost by letting $t$ range over the whole real line. We include the short proof, since the extension to the whole real line is precisely what drives our reversed inequalities.
	
	\begin{prop}\label{prop:convexity-R}
		Let $A, B \in \mathbb{M}_n^+$ be positive definite, and let $X \in \mathbb{M}_n$. 
		Then the function
		\[
		f(t) := |||A^t X B^{1-t}|||, \qquad t \in \mathbb{R},
		\]
		is log-convex (and hence convex) on $\mathbb{R}$.
	\end{prop}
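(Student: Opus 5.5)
Log-convexity of $f$ on $\mathbb{R}$ means $f(\tfrac{s+u}{2})^2\le f(s)f(u)$ together with continuity, or more generally $f((1-\alpha)s+\alpha u)\le f(s)^{1-\alpha}f(u)^{\alpha}$. I would prove the weighted inequality directly from Lemma \ref{lem:holder}, by rewriting $A^tXB^{1-t}$ so that the lemma applies with a fixed middle matrix. Convexity then follows because a log-convex positive function is convex (compose the convex function $\log f$ with the increasing convex function $\exp$).

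Fix $s,u\in\mathbb{R}$ and $\alpha\in[0,1]$, and set $t=(1-\alpha)s+\alpha u$. Since $A,B>0$, all real powers $A^r,B^r$ are defined, positive definite, and obey $A^{r}A^{r'}=A^{r+r'}$. Put $Z:=A^{s}XB^{1-u}$ and take the positive definite matrices $P:=A^{u-s}$ and $Q:=B^{u-s}$. Then
\[
P^{\alpha}ZQ^{1-\alpha}=A^{\alpha(u-s)+s}XB^{1-u+(1-\alpha)(u-s)}=A^{t}XB^{1-t},
\]
where the last exponent is checked using $1-u+(1-\alpha)(u-s)=1-t$. For the two endpoint products we have
\[
PZ=A^{u}XB^{1-u},\qquad ZQ=A^{s}XB^{1-s}.
\]
Lemma \ref{lem:holder} applied to $P,Q\ge 0$, the matrix $Z$, and weight $\alpha$ gives
\[
f(t)\le |||PZ|||^{\alpha}\,|||ZQ|||^{1-\alpha}=f(u)^{\alpha}f(s)^{1-\alpha}.
\]
This is exactly log-convexity.

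The main obstacle is that the paper's $f$ takes values in $(0,\infty)$ only when $X\neq 0$. If $X=0$, then $f\equiv 0$; this case is trivial, and log-convexity can be interpreted as $\log f\equiv-\infty$ or simply excluded. If $X\ne0$, then $A^tXB^{1-t}\neq 0$ because $A,B$ are invertible, so $f>0$ and $\log f$ is a genuine real convex function. No separate continuity argument is needed, since the weighted inequality has been obtained for every $\alpha\in[0,1]$. A secondary point to verify is that Lemma \ref{lem:holder} permits $P,Q$ to be arbitrary positive definite matrices, which holds because it is stated for all positive semi-definite matrices. That is the only place where positive definiteness of $A,B$ is used, since it is what makes negative exponents $u-s<0$ legitimate.
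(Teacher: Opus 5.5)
Your proof is correct and is the paper's argument up to relabeling: with $t_1=u$ and $t_2=s$, your factorization $P^{\alpha}ZQ^{1-\alpha}$ with $Z=A^{s}XB^{1-u}$, $P=A^{u-s}$, $Q=B^{u-s}$ is exactly the paper's $(A^{t_1-t_2})^{\alpha}(A^{t_2}XB^{1-t_1})(B^{t_1-t_2})^{1-\alpha}$, followed by the same single application of Lemma \ref{lem:holder}. Your handling of the trivial case $X=0$, and of $f>0$ otherwise, likewise matches the paper.
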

	
	\begin{proof}
		If $X = 0$, the result is trivial. Assume $X \neq 0$, so $f(t) > 0$ for all $t$. 
		
		Let $t_1, t_2 \in \mathbb{R}$ be arbitrary, and let $\alpha \in [0,1]$. We need to show
		\[
		f(\alpha t_1 + (1-\alpha)t_2) \le f(t_1)^\alpha f(t_2)^{1-\alpha}.
		\]
		
		The following identity rewrites the argument of the norm in a form convenient to \Cref{lem:holder}:
		\begin{align*}
			f(\alpha t_1 + (1-\alpha)t_2) &= ||| A^{\alpha t_1 + (1-\alpha)t_2} X B^{1-(\alpha t_1 + (1-\alpha)t_2)} |||\\
			&= ||| (A^{t_1-t_2})^{\alpha} (A^{t_2} X B^{1-t_1}) (B^{t_1-t_2})^{1-\alpha} |||.
		\end{align*}
		
		Indeed, the right hand side is a product of three terms, with the middle term independent of $\alpha \in [0,1]$. Applying \Cref{lem:holder} for unitarily invariant norms yields
		\[
		\begin{aligned}
			f(\alpha t_1 + (1-\alpha)t_2)
			&\le ||| A^{t_1-t_2} (A^{t_2} X B^{1-t_1}) |||^{\alpha} \cdot ||| (A^{t_2} X B^{1-t_1}) B^{t_1-t_2} |||^{1-\alpha} \\
			&= ||| A^{t_1} X B^{1-t_1} |||^{\alpha} \cdot ||| A^{t_2} X B^{1-t_2} |||^{1-\alpha} \\
			&= f(t_1)^{\alpha} f(t_2)^{1-\alpha}.
		\end{aligned}
		\]
		
		Since $t_1, t_2 \in \mathbb{R}$ and $\alpha \in [0,1]$ were arbitrary, $f$ is log-convex on $\mathbb{R}$, hence convex on $\mathbb{R}$.
	\end{proof}
	
	The next result is closely related to a number of known inequalities in the literature. The Ando--Hiai--Okubo trace inequality \cite{AHO} and its extension to complex matrices by S. Hayajneh, M. Hayajneh, and F. Kittaneh \cite{HHK} 
	concern products of the alternating form 
	$$M(p,q) = A^p B^q A^{1-p} B^{1-q}.$$ 
	\Cref{thm:wlog-main} is the log-majorization counterpart to these results, now involving the structurally analogous interlaced product \(Y^t X^* Y^{1-t} X\), where $Y$ is positive semi-definite and $X$ is normal. This line of inquiry also parallels the Bourin-type inequalities \cite{Bourin2006} studied by Bhatia \cite{Bhatia2014}, S. Hayajneh and F. Kittaneh \cite{HK2013}, and T. Bottazzi et al. \cite{Bottazzi2015}.
	
	\begin{thm}\label{thm:wlog-main}
		Let $Y\in\mathbb{M}_n$ be a positive semi-definite matrix and let $X\in\mathbb{M}_n$ be a normal matrix. Then, for all $0\le t\le 1$, it holds that
		\begin{equation}\label{eq:wlog-main}
			\lambda(Y^tX^*Y^{1-t}X) \prec_{\log} \lambda(X^*YX).
		\end{equation}
		The inequality is reversed for $t \notin [0,1]$, provided $Y$ is positive definite.
	\end{thm}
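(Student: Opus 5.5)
Since log-majorization is a relation among spectra, it suffices to compare largest eigenvalues and then pass to all $k$ through antisymmetric tensor powers. First, the eigenvalues of $Y^tX^*Y^{1-t}X$ are nonnegative. By cyclicity, $Y^tX^*Y^{1-t}X$ has the same spectrum as
\[
Y^{t/2}X^*Y^{(1-t)/2}\,Y^{(1-t)/2}XY^{t/2}=Z_tZ_t^*,\qquad Z_t:=Y^{t/2}X^*Y^{(1-t)/2}.
\]
(For $t\notin[0,1]$ this uses invertibility of $Y$.) Hence $\lambda_1(Y^tX^*Y^{1-t}X)=\|Z_t\|_\infty^2$. At $t=0$ we get $\|X^*Y^{1/2}\|^2=\lambda_1(X^*YX)$. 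At $t=1$ we get $\|Y^{1/2}X^*\|^2=\lambda_1(XYX^*)$. Normality of $X$ is what makes these equal. Write $X=U|X|$ with $U$ unitary commuting with $|X|$ (polar decomposition of a normal matrix). Then $XYX^*$ is unitarily similar to $X^*YX$, because $X^*=U^*XU$ when $X$ is normal, so $XYX^*=U^*X U Y U^* X^* U$. Alternatively, $\|Y^{1/2}X^*\|=\|XY^{1/2}\|$, and $\lambda(XYX^*)=\lambda(Y^{1/2}X^*XY^{1/2})=\lambda(Y^{1/2}XX^*Y^{1/2})=\lambda(X^*YX)$ since $X^*X=XX^*$. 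So the endpoint values agree, and the determinants agree for every $t$:
\[
\det(Y^tX^*Y^{1-t}X)=\det Y\,|\det X|^2=\det(X^*YX).
\]

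For $Y>0$, set $f(t)=\|Y^{t/2}X^*Y^{(1-t)/2}\|_\infty$. This is exactly Proposition \ref{prop:convexity-R} with $A=B=Y^{1/2}$ and the matrix $X^*$ in the middle, so $f$ is convex on $\mathbb{R}$ (trivially if $X=0$). Lemma \ref{lem:convex-fact} together with $f(0)=f(1)$ then gives $\lambda_1(Y^tX^*Y^{1-t}X)\le\lambda_1(X^*YX)$ for $t\in[0,1]$, with the reverse inequality outside $[0,1]$.

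To get all $k$, apply this to the $k$-th antisymmetric powers $\wedge^kY>0$ and $\wedge^kX$, which is again normal. Multiplicativity $\wedge^k(Y^s)=(\wedge^kY)^s$ and $\lambda_1(\wedge^k M)=\prod_{i\le k}\lambda_i(M)$ for $M$ with nonnegative spectrum then give the partial product inequalities. The spectrum of $M$ is nonnegative here because it is that of $Z_tZ_t^*$, and likewise for $\wedge^kM$, so the top eigenvalue of $\wedge^k M$ is the product of the $k$ largest eigenvalues of $M$. Combined with the determinant equality, this yields \eqref{eq:wlog-main} and its reversal.

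It remains to handle singular $Y\ge0$ with $t\in[0,1]$. Replace $Y$ by $Y+\varepsilon I$ and let $\varepsilon\to0$. Eigenvalues depend continuously on the entries, and $(Y+\varepsilon I)^s\to Y^s$ for $s\in[0,1]$ (with $Y^0$ read as the identity). The partial product inequalities and the determinant equality are closed conditions, so they pass to the limit.

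The main obstacle is the endpoint equality $f(0)=f(1)$ and the spectral identification $\lambda_1=\|Z_t\|^2$. This is where normality enters, so I would verify carefully that $\|X^*Y^{1/2}\|=\|XY^{1/2}\|$, which holds because $X^*X=XX^*$ forces $\|Xv\|=\|X^*v\|$ for every vector $v$. The compound-matrix bookkeeping is routine.
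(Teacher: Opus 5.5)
Your proposal is correct and follows essentially the paper's argument: convexity of $t\mapsto\|Y^{t/2}X^*Y^{(1-t)/2}\|_\infty$ on $\mathbb{R}$ via Proposition~\ref{prop:convexity-R}, equal endpoint values from normality, Lemma~\ref{lem:convex-fact}, antisymmetric tensor powers, and the determinant identity; your explicit $Y+\varepsilon I$ limit for singular $Y$ also fills in a step the paper leaves implicit, since Proposition~\ref{prop:convexity-R} needs $Y>0$. One small slip: the claim $X^*=U^*XU$ for normal $X$ is false in general (take $X=iI$, $U=iI$), but it is harmless because your alternative argument $\|X^*Y^{1/2}\|_\infty=\|XY^{1/2}\|_\infty=\|Y^{1/2}X^*\|_\infty$ correctly gives $f(0)=f(1)$.
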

	
	\begin{proof}
		First, we define for each $k = 1, 2, \dots, n$
		\[
		X_k := \wedge^k X, \qquad Y_k := \wedge^k Y.
		\]
		Since $Y$ is positive semi-definite, $Y_k$ is also positive semi-definite. Moreover, since $X$ is normal, $X_k$ is also normal:
		\[
		X_k^* X_k = (\wedge^k X)^*(\wedge^k X) = \wedge^k(X^*X) = \wedge^k(XX^*) = (\wedge^k X)(\wedge^k X)^* = X_k X_k^*.
		\]
		
		Using the standard identities for antisymmetric tensor products,
		\[
		Y_k^t X_k^* Y_k^{1-t} X_k = (\wedge^k Y)^t (\wedge^k X)^* (\wedge^k Y)^{1-t} (\wedge^k X) = \wedge^k (Y^t X^* Y^{1-t} X),
		\]
		and
		\[
		X_k^* Y_k X_k = (\wedge^k X)^* (\wedge^k Y) (\wedge^k X) = \wedge^k (X^* Y X).
		\]
		
		Thus we obtain for all $k = 1, 2,\dots, n$
		\[
		\lambda_1(Y_k^t X_k^* Y_k^{1-t} X_k) = \prod_{i=1}^k \lambda_i(Y^t X^* Y^{1-t} X),\ \text{and} \  
		\lambda_1(X_k^* Y_k X_k) = \prod_{i=1}^k \lambda_i(X^*YX).
		\]
		
		Consequently, to establish the desired weak log-majorization it suffices to prove, for each $k = 1, \dots, n$,
		\[
		\lambda_1(Y_k^t X_k^* Y_k^{1-t} X_k) \leq \lambda_1(X_k^* Y_k X_k).
		\]
		
		For $t \in \mathbb{R}$ and $Y > 0$, define
		\[
		A_t := Y^{(1-t)/2} X Y^{t/2}.
		\]
		Then
		\[
		\sigma(A_t)^2 = \lambda(A_t^* A_t)
		= \lambda\big(Y^{t/2} X^* Y^{1-t} X Y^{t/2}\big)
		= \lambda\big(Y^t X^* Y^{1-t} X\big),
		\]
		where the last equality follows from the invariance of eigenvalues under cyclic permutations. In particular, for $t=0$,
		\[
		\sigma(Y^{1/2} X)^2 = \lambda(X^* Y X).
		\]
		
		Applying Proposition \ref{prop:convexity-R} to the spectral norm $\|\cdot\|_{\infty}$ (which is unitarily invariant), the function
		\[
		g(t) = \| Y^{t/2} X^* Y^{(1-t)/2} \|_{\infty}
		\]
		is convex on $\mathbb{R}$. Since $X$ is normal, we get
		\[
		g(0) = \| X^* Y^{1/2} \|_{\infty} = \| Y^{1/2} X \|_{\infty}, \ \text{and} \ \
		g(1) = \| Y^{1/2} X^* \|_{\infty} = \| X Y^{1/2} \|_{\infty} = \| Y^{1/2} X \|_{\infty}.
		\]
		Thus, $g(0)=g(1)=\sigma_1(Y^{1/2}X)$. By Lemma \ref{lem:convex-fact}, for $0 \le t \le 1$,
		\[
		\| Y^{t/2} X^* Y^{(1-t)/2} \|_{\infty} \le \| Y^{1/2} X \|_{\infty}.
		\]
		
		Squaring both sides yields
		\[
		\sigma_1(Y^{t/2} X^* Y^{(1-t)/2})^2 \le \sigma_1(Y^{1/2} X)^2,
		\]
		which is precisely $\lambda_1(Y^t X^* Y^{1-t} X) \le \lambda_1(X^*YX)$. Repeating this argument with $(X_k, Y_k)$ in place of $(X,Y)$ establishes the weak log-majorization
		\[
		\lambda(Y^t X^* Y^{1-t} X) \prec_{w\log} \lambda(X^* Y X), \qquad 0 \le t \le 1.
		\]
		
		To upgrade this to full log-majorization, we observe the determinantal identity
		\[
		\det(Y^t X^* Y^{1-t} X) = \det(X^* Y X).
		\]
		Therefore, the log-majorization inequality \eqref{eq:wlog-main} is obtained.\\
		
		Finally, for $t \notin [0,1]$, assume $Y$ is positive definite. Lemma \ref{lem:convex-fact} gives $g(t) \ge g(0)$ for all $t \notin [0,1]$. Hence
		\[
		\| Y^{t/2} X^* Y^{(1-t)/2} \|_{\infty} \ge \| Y^{1/2} X \|_{\infty}.
		\]
		Following the same procedure as above, we obtain the reversed log-majorization
		\[
		\lambda(Y^t X^* Y^{1-t} X) \succ_{\log} \lambda(X^* Y X)
		\]
		for $t \notin [0,1]$ when $Y$ is positive definite.
	\end{proof}
	
	The first part of the following theorem was proved by L. Plevnik \cite{Plevnik2016}, with positive semi-definite matrices. In addition, the second part extends to normal $X$ the corresponding Hermitian results of \cite{MAM,GAM}.
	
	\begin{thm}\label{thm:extensions}
		Let $Y \in \mathbb{M}_n$ be positive semi-definite and let $X \in \mathbb{M}_n$ be normal. We have for any $p, q \ge 0$,
		\begin{enumerate}
			\item[(1)] $\lambda(Y^p X^* Y^q X) \prec_{\log} \lambda(X^* Y^{p+q} X).$
			\vspace{0.4cm}
			\item[(2)] $\lambda(Y^p X^* Y^{-q} X) \succ_{\log} \lambda(X^* Y^{p-q} X)$, given $Y$ invertible.
		\end{enumerate}
	\end{thm}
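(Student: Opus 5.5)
Both parts reduce to Theorem \ref{thm:wlog-main} by rescaling $Y$ to a suitable power. For (1), if $p+q=0$ then $p=q=0$ and both sides equal $\lambda(X^*X)$, so there is nothing to prove. Otherwise set $r:=p+q>0$, $Z:=Y^{r}$ (positive semi-definite) and $t:=p/r\in[0,1]$. Then $Z^t=Y^p$ and $Z^{1-t}=Y^q$. This holds also for singular $Y$, since $Y^p$ and $Y^q$ are defined through the spectral decomposition with the convention $0^0=1$, which is consistent with $Z^0=I$ at the endpoints. Applying Theorem \ref{thm:wlog-main} to the pair $(Z,X)$ gives
\[
\lambda(Y^pX^*Y^qX)=\lambda(Z^tX^*Z^{1-t}X)\prec_{\log}\lambda(X^*ZX)=\lambda(X^*Y^{p+q}X).
\]

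For (2), $Y$ is positive definite, so every real power is defined. Put $r:=p-q$. If $r\neq 0$, set $Z:=Y^{r}>0$ and $t:=p/r$, so that $1-t=-q/r$ and hence $Z^t=Y^p$, $Z^{1-t}=Y^{-q}$. When $r>0$ we have $t=p/(p-q)\ge 1$, with $t=1$ only if $q=0$. When $r<0$ we have $t\le 0$, with $t=0$ only if $p=0$. So $t\notin(0,1)$.
\begin{itemize}
\item If $t\notin[0,1]$, the reversed part of Theorem \ref{thm:wlog-main} gives $\lambda(Z^tX^*Z^{1-t}X)\succ_{\log}\lambda(X^*ZX)$, which is exactly the claim.
\item If $t\in\{0,1\}$, the two sides have the same eigenvalues. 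For $t=1$ (so $q=0$), cyclicity gives $\lambda(Y^pX^*X)=\lambda(XY^pX^*)$, and normality ($X^*X=XX^*$) turns this into $\lambda(X^*Y^pX)$. The case $t=0$ (so $p=0$) is immediate.
\end{itemize}

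The only delicate case is $p=q$, where $r=0$ and the rescaling breaks down. The claim then reads $\lambda(Y^pX^*Y^{-p}X)\succ_{\log}\lambda(X^*X)$. I would handle it directly with the antisymmetric-tensor reduction from the proof of Theorem \ref{thm:wlog-main}. The determinants agree, so it suffices to show $\lambda_1(Y^pX^*Y^{-p}X)\ge\lambda_1(X^*X)$, and the same argument applied to $\wedge^k$ gives the general case. For this I would consider $h(s):=\|Y^{s}XY^{-s}\|_\infty$, which is log-convex on $\mathbb R$ by Proposition \ref{prop:convexity-R} with $A=Y$, $B=Y^{-1}$. At $s=0$ it equals $\|X\|_\infty$. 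Since $X$ is normal, $\|X\|_\infty$ is its spectral radius, which is a similarity invariant, so $h(s)\ge\|X\|_\infty=h(0)$ for every $s$. Next, $\lambda(Y^pX^*Y^{-p}X)$ equals the squared singular values of $Y^{-p/2}XY^{p/2}$, so its top entry is $h(-p/2)^2\ge\|X\|_\infty^2=\lambda_1(X^*X)$.

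Alternatively, this case should follow by continuity from $q\to p$ with $q\neq p$, because log-majorization is preserved under limits. Either route works; I expect this degenerate case $p=q$, and the normality-based spectral-radius argument it requires, to be the main point needing care.
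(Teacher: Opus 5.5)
Your proof is correct and follows the paper's route: both parts reduce to Theorem~\ref{thm:wlog-main} by replacing $Y$ with $Y^{p\pm q}$, and for $p=q$ you use that singular values dominate eigenvalue moduli, with normality giving $|\lambda(X)|^2=\lambda(X^*X)$. Your $\|M\|_\infty\ge\rho(M)$ plus $\wedge^k$ argument is just the standard proof of the Weyl majorant inequality the paper cites, and your handling of $p<q$ via $t=p/(p-q)\le 0$ is a slightly more direct version of the paper's substitution $Y\mapsto Y^{-1}$, $X\mapsto X^*$. One harmless slip: the log-convexity of $h$ is never used, and it does not follow from Proposition~\ref{prop:convexity-R} with $A=Y$, $B=Y^{-1}$, which gives $\|Y^sXY^{s-1}\|_\infty$; you would need $A=B=Y$ with $XY^{-1}$ in place of $X$.
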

	
	\begin{proof}
		For (1), the case $p=q=0$ is trivial, so we assume that $p+q \neq 0$. We simply substitute $Y \mapsto Y^{p+q}$ and take $t = \frac{p}{p+q} \in [0,1]$ in Theorem \ref{thm:wlog-main}, which gives
		\[
		\lambda((Y^{p+q})^{\frac{p}{p+q}} X^* (Y^{p+q})^{1-\frac{p}{p+q}} X) \prec_{\log} \lambda(X^* Y^{p+q} X),
		\]
		which is precisely (1).\\
		
		For the second inequality (2), we introduce three cases:
		
		\begin{enumerate}
			\item[\underline{Case 1:}] Let $p>q$. We replace $Y$ with $Y^{p-q}$ and take $t = \frac{p}{p-q} > 1$ in the reversed inequality of Theorem \ref{thm:wlog-main}. This yields
			\begin{equation}
				\lambda(X^* Y^{p-q} X) \prec_{\log} \lambda(Y^p X^* Y^{-q} X). \label{11}
			\end{equation}
			\item[\underline{Case 2:}] Let $p<q$. It follows directly by substituting $Y$ with $Y^{-1}$ and $X$ with $X^*$, as well as taking $p=q$ and $q=p$ in the first case.
			\item[\underline{Case 3:}] Let $p = q$. Our aim reduces to verifying the majorization
			\begin{equation}
				\lambda(X^*X) \prec_{\log} \lambda(Y^p X^* Y^{-p} X). \label{111}
			\end{equation}
			
			Since $X$ is normal, we have
			\[
			\lambda(X^*X) = |\lambda(X)|^2.
			\]
			
			Now, applying Weyl's majorant inequality (see \cite[Corollary 10.2]{Zhang}), we obtain
			\begin{align*}
				\lambda(Y^p X^* Y^{-p} X) 
				&= \sigma\bigl(Y^{-p/2} X Y^{p/2}\bigr)^2 \\
				&\succ_{\log} \bigl|\lambda\bigl(Y^{-p/2} X Y^{p/2}\bigr)\bigr|^2 \\
				&= |\lambda(X)|^2 \\
				&= \lambda(X^*X),
			\end{align*}
			which establishes~\eqref{111}.
		\end{enumerate}
	\end{proof}

	The endpoint comparisons obtained so far exploit only part of the information contained in the convexity of the function
	\[
	g(t) := \|Y^{t/2} X^* Y^{(1-t)/2}\|_\infty.
	\]
	Convexity in fact implies a richer variational picture. It tells us how the eigenvalues move as the weight slides continuously from one side of the product to the other. The ``valley'' phenomenon of this kind is classical for Heinz-type means: for positive definite $A,B$ and arbitrary $X$, the function
	\[
	\nu \mapsto \|A^\nu X B^{1-\nu} + A^{1-\nu} X B^\nu\|
	\]
	is convex, symmetric about $\nu = 1/2$, decreasing on $[0,1/2]$, and increasing on $[1/2,1]$; the balanced point $\nu = 1/2$ is therefore optimal (see \cite{Kit2010} and the refinements in \cite{AM}). \\
	
	The results below exhibit the same phenomenon for the eigenvalue products of $Y^{t}X^{*}Y^{1-t}X$, thereby providing a continuous family of log-majorizations that interpolates between the two cases of Theorem \ref{thm:wlog-main} and showing that it is only the endpoint of a one-parameter scale of inequalities. However, for the monotonicity results that follow, we require $X$ to be Hermitian rather than merely normal.
	
	\begin{thm}\label{thm:monotonicity}
		Let $Y\in\mathbb{M}_n$ be a positive definite matrix and let $X\in\mathbb{M}_n$ be Hermitian. Then the function
		\[
		g(t):=\|Y^{t/2}XY^{(1-t)/2}\|_\infty,\qquad t\in\mathbb{R},
		\]
		is non-increasing on $(-\infty,1/2]$ and non-decreasing on $[1/2,\infty)$. Consequently, if
		\[
		\left|t_1-\frac{1}{2}\right|
		\le
		\left|t_2-\frac{1}{2}\right|,
		\]
		then
		\[
		\lambda(Y^{t_1}XY^{1-t_1}X)
		\prec_{\log}
		\lambda(Y^{t_2}XY^{1-t_2}X).
		\]
	\end{thm}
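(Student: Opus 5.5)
The plan is to combine the convexity from Proposition~\ref{prop:convexity-R} with a symmetry of $g$ about $t=1/2$ that holds because $X$ is Hermitian. Applying Proposition~\ref{prop:convexity-R} with $A=B=Y^{1/2}$ and the spectral norm, the function $g(t)=\|Y^{t/2}XY^{(1-t)/2}\|_\infty$ is (log-)convex on $\mathbb{R}$. For the symmetry, take adjoints:
\[
\|Y^{t/2}XY^{(1-t)/2}\|_\infty=\|(Y^{t/2}XY^{(1-t)/2})^*\|_\infty=\|Y^{(1-t)/2}X^*Y^{t/2}\|_\infty=\|Y^{(1-t)/2}XY^{t/2}\|_\infty .
\]
Since $X^*=X$, this says $g(t)=g(1-t)$ for every $t\in\mathbb{R}$. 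This is exactly where Hermiticity, and not mere normality, is used.

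Next I derive monotonicity from convexity plus symmetry. For $s\le t\le 1/2$, the point $t$ lies between $s$ and $1-s$, so convexity gives $g(t)\le \max\{g(s),g(1-s)\}=g(s)$. Thus $g$ is non-increasing on $(-\infty,1/2]$. The symmetry $g(t)=g(1-t)$ then gives that $g$ is non-decreasing on $[1/2,\infty)$. Together these show $g(t)$ is a non-decreasing function of $|t-1/2|$. Concretely, if $|t_1-1/2|\le|t_2-1/2|$, replace $t_2$ by $1-t_2$ if needed so that $t_1,t_2$ lie on the same side of $1/2$; then $g(t_1)\le g(t_2)$.

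To pass to eigenvalues, I repeat the compound-matrix reduction from the proof of Theorem~\ref{thm:wlog-main}. Set $X_k=\wedge^kX$, which is Hermitian, and $Y_k=\wedge^kY$, which is positive definite. The function $g_k$ built from $(X_k,Y_k)$ satisfies the same monotonicity. As in that proof,
\[
g_k(t)^2=\lambda_1\bigl(Y_k^tX_kY_k^{1-t}X_k\bigr)=\prod_{i=1}^k\lambda_i\bigl(Y^tXY^{1-t}X\bigr).
\]
This uses that $Y^{t}XY^{1-t}X$ is similar to $B^*B$ with $B=Y^{(1-t)/2}XY^{t/2}$, so its eigenvalues are nonnegative and equal to the squared singular values. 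Hence $g_k(t_1)\le g_k(t_2)$ for all $k$ yields $\lambda(Y^{t_1}XY^{1-t_1}X)\prec_{w\log}\lambda(Y^{t_2}XY^{1-t_2}X)$. Equality at $k=n$ follows because both determinants equal $\det(Y)\det(X)^2$, which upgrades this to full log-majorization.

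The only delicate step is the monotonicity argument, in particular the case where $s$ and $1-s$ bracket $t$. It needs convexity on all of $\mathbb{R}$, not just on $[0,1]$, and this is exactly why $Y$ must be positive definite. Everything else is routine bookkeeping that mirrors Theorem~\ref{thm:wlog-main}.
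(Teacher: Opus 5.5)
Your proof is correct and follows essentially the same route as the paper. You obtain the symmetry $g(t)=g(1-t)$ by taking adjoints with $X^*=X$, get convexity on all of $\mathbb{R}$ from Proposition~\ref{prop:convexity-R}, deduce monotonicity in $|t-1/2|$, and then pass to log-majorization through the compound-matrix reduction. The differences are cosmetic: you get monotonicity from the bracketing bound $g(t)\le\max\{g(s),g(1-s)\}$ rather than first locating the minimum at $t=1/2$, and you state the determinant identity $\det(Y^tXY^{1-t}X)=\det(Y)\det(X)^2$ explicitly, which the paper leaves implicit.
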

	
	\begin{proof}
		Since $X$ is Hermitian, we have
		\[
		g(t) = \|Y^{t/2} X Y^{(1-t)/2}\|_\infty
		= \| (Y^{t/2} X Y^{(1-t)/2})^* \|_\infty
		= \|Y^{(1-t)/2} X Y^{t/2}\|_\infty
		= g(1-t)
		\]
		for all $t \in \mathbb{R}$. Thus $g$ is symmetric about $1/2$. In addition, note that $g$ is convex for any $t \in \mathbb{R}$ by Proposition \ref{prop:convexity-R}. So, convexity and symmetry of $g$ give
		\[
		g(t) = \frac{g(t) + g(1-t)}{2} \ge g\left(\frac{t + (1-t)}{2}\right) = g\big(\frac{1}{2}\big).
		\]
		Hence $g$ attains its global minimum at $t = 1/2$. A convex function with a minimum at $m$ is non-increasing on $(-\infty, m]$ and non-decreasing on $[m, \infty)$; applying this with $m = 1/2$ yields the claimed monotonicity.\\
		
		Since $g(t)=g(1-t)$, we have $g(t)=g\big(\frac{1}{2}+|t-\frac{1}{2}|\big)$ for all $t\in\mathbb{R}$. \\
		
		Next, we define $$h(s)=g\left(\frac{1}{2}+s\right), \ \ \ \ \ \ \text{for} \ s\ge 0.$$ 
		
		As $g$ is increasing on $[\frac{1}{2},\infty)$, 
		$h$ is increasing on $[0,\infty)$. Hence $g(t)=h\big(|t-\frac{1}{2}|\big)$, and therefore
		\[
		\Big|t_1-\frac{1}{2}\Big|\le\Big|t_2-\frac{1}{2}\Big|\;\Longrightarrow\; g(t_1)\le g(t_2).
		\]
		
		To obtain the log-majorization, square the inequality $g(t_1) \le g(t_2)$ and apply \Cref{thm:wlog-main} to the exterior powers $X_k = \wedge^k X$ and $Y_k = \wedge^k Y$ for each $k = 1, \dots, n$. This implies
		\[
		\lambda(Y^{t_1} X Y^{1-t_1} X) \prec_{\log} \lambda(Y^{t_2} X Y^{1-t_2} X),
		\]
		completing the proof.
	\end{proof}
	
	In dimension $n=2$, the conclusion of \Cref{thm:monotonicity} holds for \emph{every} normal $X$. 
	
	\begin{prop}\label{prop:normal-2d}
		Let $Y\in\mathbb{M}_2$ be positive definite and let $X\in\mathbb{M}_2$ be normal. Then the function
		\[
		g(t) = \|Y^{t/2} X Y^{(1-t)/2}\|_\infty, \qquad t\in\mathbb{R},
		\]
		satisfies $g(t) = g(1-t)$ and is convex. Consequently, $g$ is decreasing on $(-\infty,1/2]$ and increasing on $[1/2,\infty)$, and for any $t_1,t_2\in\mathbb{R}$,
		\[
		\Big|t_1 - \frac{1}{2}\Big| \le \Big|t_2 - \frac{1}{2}\Big|
		\;\Longrightarrow\;
		\lambda(Y^{t_1} X Y^{1-t_1} X) \prec_{\log} \lambda(Y^{t_2} X Y^{1-t_2} X).
		\]
	\end{prop}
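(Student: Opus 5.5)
The plan is to reduce everything to the symmetry $g(t)=g(1-t)$, since convexity is already given by Proposition \ref{prop:convexity-R}, and the rest then runs exactly as in the proof of Theorem \ref{thm:monotonicity}. Taking adjoints,
\[
g(1-t)=\|Y^{(1-t)/2}XY^{t/2}\|_\infty=\|Y^{t/2}X^*Y^{(1-t)/2}\|_\infty .
\]
So, writing $M=Y^{t/2}XY^{(1-t)/2}$ and $N=Y^{t/2}X^*Y^{(1-t)/2}$, it suffices to show $\|M\|_\infty=\|N\|_\infty$.

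The key point is to use the $2\times 2$ structure. For any $Z\in\mathbb{M}_2$ the two singular values are the roots of $s^2-\|Z\|_F^2\,s+|\det Z|^2=0$ in $s=\sigma^2$. Hence
\[
\|Z\|_\infty^2=\tfrac12\Big(\|Z\|_F^2+\sqrt{\|Z\|_F^4-4|\det Z|^2}\Big),
\]
and it is enough to match Frobenius norms and moduli of determinants.

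The determinants are immediate: $\det M=\det(Y)^{1/2}\det X$ and $\det N=\det(Y)^{1/2}\overline{\det X}$, which have equal modulus.

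For the Frobenius norms, set $A=Y^{t}$ and $B=Y^{1-t}$. These commute, being functions of $Y$. Then $\|M\|_F^2=\operatorname{tr}(BX^*AX)$ and $\|N\|_F^2=\operatorname{tr}(BXAX^*)$.

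Since $X$ is a $2\times2$ normal matrix, I write $X=U\operatorname{diag}(\lambda_1,\lambda_2)U^*=aI+bP$. Here
\[
a=\tfrac{\lambda_1+\lambda_2}{2},\qquad b=\tfrac{\lambda_1-\lambda_2}{2},\qquad P=U\operatorname{diag}(1,-1)U^*,
\]
so $P$ is Hermitian. Expanding both traces, the terms with $|a|^2$ and $|b|^2$ agree. The difference reduces to
\[
(\bar a b-a\bar b)\big(\operatorname{tr}(BAP)-\operatorname{tr}(BPA)\big)=(\bar a b-a\bar b)\operatorname{tr}\big(P[B,A]\big),
\]
which vanishes because $[A,B]=0$. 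This gives $g(t)=g(1-t)$.

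This symmetry step is the only real obstacle. The counterexample for $n\ge3$ shows that the Frobenius and determinant data no longer determine the spectral norm there. The $2\times2$ singular-value formula is exactly what rescues the argument in dimension two.

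With symmetry and convexity in hand, the argument proceeds as in Theorem \ref{thm:monotonicity}. Convexity gives $g(t)=\tfrac12\big(g(t)+g(1-t)\big)\ge g(1/2)$, so $g$ is minimized at $1/2$, non-increasing on $(-\infty,1/2]$ and non-decreasing on $[1/2,\infty)$. It follows that $g(t)=h(|t-1/2|)$ with $h$ non-decreasing, and hence $|t_1-1/2|\le|t_2-1/2|$ implies $g(t_1)\le g(t_2)$.

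For the log-majorization, note that the eigenvalues of $Y^{t}XY^{1-t}X$ are the squared singular values of $Y^{(1-t)/2}XY^{t/2}$ (up to cyclic permutation). Here we are comparing the top singular values, and I expect no need for exterior powers beyond $k=1$. The reason is that in dimension $2$, the case $k=2$ is the equality $\det(Y^{t_1}XY^{1-t_1}X)=\det(Y)\det(X)^2=\det(Y^{t_2}XY^{1-t_2}X)$, which holds regardless of $t$. Squaring $g(t_1)\le g(t_2)$, after rewriting $g(1-t)$ in terms of $X$ via the symmetry just proved, gives the $k=1$ inequality. Together with the determinant equality this yields the stated log-majorization.
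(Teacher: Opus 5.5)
Your route is essentially the paper's. In dimension two everything is decided by one trace and one determinant. The determinant does not depend on $t$, and the trace is invariant under $t\mapsto 1-t$; after that, the convexity-plus-symmetry argument of Theorem \ref{thm:monotonicity} finishes the proof. Your $\|N\|_F^2=\operatorname{tr}(Y^{t}X^*Y^{1-t}X)$, and $\|M\|_F^2$ is the same expression at $1-t$, so you are proving exactly the paper's identity $\operatorname{tr}M(t)=\operatorname{tr}M(1-t)$. The only difference is how you verify it. The paper diagonalizes $Y$ and uses $|b|=|c|$ for a $2\times 2$ normal matrix. You work coordinate-free via $X=aI+bP$ and $[Y^t,Y^{1-t}]=0$, which is correct and shows neatly that only the cross term $(\bar ab-a\bar b)$ could break the symmetry.

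There is one real error, in your last paragraph. The squared singular values of $Y^{(1-t)/2}XY^{t/2}$ are the eigenvalues of $Y^{t/2}X^*Y^{1-t}XY^{t/2}$, hence of $Y^tX^*Y^{1-t}X$, not of $Y^tXY^{1-t}X$. For non-Hermitian normal $X$ these two products have different spectra. Take $X=iI$: then $Y^tXY^{1-t}X=-Y$ has negative eigenvalues, so $\prec_{\log}$ is not even defined for it, while $Y^{(1-t)/2}XY^{t/2}=iY^{1/2}$ has squared singular values $\lambda(Y)$. The printed statement carries the same slip, and the paper's proof (via $M(t)=Y^tX^*Y^{1-t}X$) shows that $M(t)$ is the intended object. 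Under that reading your ending is right:
\begin{itemize}
\item $M(t)$ is similar to the positive semidefinite matrix $Y^{t/2}X^*Y^{1-t}XY^{t/2}$;
\item $\lambda_1(M(t))=g(1-t)^2=g(t)^2$;
\item $\det M(t)=\det(Y)\,|\det X|^2$ for all $t$ (not $\det(Y)\det(X)^2$).
\end{itemize}
So for $n=2$, the inequality $g(t_1)\le g(t_2)$ together with equal determinants is precisely $\lambda(M(t_1))\prec_{\log}\lambda(M(t_2))$, with no exterior powers needed. State this reading explicitly rather than identifying the two products.
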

	
	\begin{proof}
		For the sake of simplicity, we define 
		\[
		M(t) := Y^t X^* Y^{1-t} X.
		\]
		Since $Y$ is positive definite, there exists a unitary $U$ such that
		\[
		U^* Y U = \begin{pmatrix} y_1 & 0 \\ 0 & y_2 \end{pmatrix}, \qquad y_1, y_2 > 0.
		\]
		Replacing $X$ by $U^* X U$ does not affect the eigenvalues (nor the normality), so we may assume without loss of generality that
		\[
		Y = \begin{pmatrix} y_1 & 0 \\ 0 & y_2 \end{pmatrix}, \qquad y_1, y_2 > 0.
		\]
		
		For any $X = \begin{pmatrix} a & b \\ c & d \end{pmatrix}$ normal, it is well known that $|b| = |c|$. A direct computation yields
		\[
		\operatorname{tr} M(t) = |a|^2 y_1 + |d|^2 y_2 + |b|^2\left(y_1^t y_2^{1-t} + y_2^t y_1^{1-t}\right),
		\]
		which is symmetric about $t=1/2$, that is, $\operatorname{tr} M(t) = \operatorname{tr} M(1-t)$. Moreover,
		\[
		\det M(t) = y_1 y_2 |\det X|^2.
		\]
		
		For a matrix $2\times 2$ with nonnegative eigenvalues and constant determinant, and $\operatorname{tr} M(t) = \operatorname{tr} M(1-t)$, we conclude that $M(t)$ and $M(1-t)$ have the same characteristic polynomial. Hence
		\[
		\lambda_i(M(t)) = \lambda_i(M(1-t)), \ \ \ \ \ \ \ \ i=1,2
		\] and consequently $g(t) = g(1-t)$. The symmetry argument used in the proof of Theorem \ref{thm:monotonicity} then applies, establishing that $g$ is decreasing on $(-\infty,1/2]$ and increasing on $[1/2,\infty)$. The log-majorizations follow exactly as in Theorem \ref{thm:monotonicity}.
	\end{proof}
	
	However, for $n \geq 3$, these properties can fail for non-Hermitian normal matrices, as the following example demonstrates.
	
	\begin{example}\label{ex:n=3-counterexample}
		Let
		\[
		Y = \begin{pmatrix}
			3 & 0 & 0\\
			0 & 2 & 0\\
			0 & 0 & 1
		\end{pmatrix},
		\qquad
		X = \begin{pmatrix}
			0 & 1 & 0\\
			0 & 0 & 1\\
			1 & 0 & 0
		\end{pmatrix}.
		\]
		The matrix \(X\) is a permutation matrix, hence unitary and therefore normal. A direct numerical computation gives
		\[
		g(0.3)=\|Y^{0.15}XY^{0.35}\|_\infty\approx1.5029,
		\]
		while
		\[
		g(0.7)=\|Y^{0.35}XY^{0.15}\|_\infty\approx1.6298.
		\]
		Hence,
		\[
		g(0.3)\neq g(0.7),
		\]
		showing that the symmetry $g(t) = g(1-t)$ fails for normal \(X\) in general.\\
		
		Furthermore,
		\[
		g\!\left(\frac{1}{2}\right)
		=\|Y^{1/4}XY^{1/4}\|_\infty
		\approx1.5651
		>1.5029=g(0.3),
		\]
		so $t=\frac{1}{2}$ is not a minimizer of $g$. In fact, a numerical search indicates that the minimum is attained near $t\approx0.27$, where
		\[
		g(0.27)\approx1.4938.
		\]
	\end{example}
	
	Nevertheless, Theorem \ref{thm:wlog-main} still guarantees that $g(t) \leq g(0) = g(1)$ for all $t \in [0,1]$, so the endpoint $t=0$ (equivalently $t=1$) remains maximal. However, the minimum is no longer guaranteed to occur at the balanced point $t = 1/2$.\\
	
	The following corollary translates the monotonicity of $g(t) := \|Y^{t/2} X Y^{(1-t)/2}\|_\infty$ into a statement about products of the form $Y^p X Y^q X$ with arbitrary non-negative exponents $p, q$. For $p, q \in \mathbb{R}$, define
	\[
	F(p,q) := \lambda(Y^p X Y^q X).
	\]
	
	\begin{cor}\label{cor:pq-monotonicity}
		Let $Y \in \mathbb{M}_n^+$ and $X \in \mathbb{M}_n$ be Hermitian. Let $p_1, p_2, q_1, q_2\ge0$ such that $p_1+q_1= p_2+q_2 =s$. If
		\[
		\left|p_1-\frac{s}{2}\right|
		\le
		\left|p_2-\frac{s}{2}\right|,
		\]
		then
		\[
		F(p_1,q_1)\prec_{\log}F(p_2,q_2).
		\]
	\end{cor}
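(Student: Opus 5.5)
The plan is to reduce to Theorem \ref{thm:monotonicity} by rescaling the exponent, and then to remove the positive definiteness assumption with a continuity argument. Note that Theorem \ref{thm:monotonicity} requires $Y>0$, while the corollary only assumes $Y\ge 0$.

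\textbf{Trivial case $s=0$.} Then $p_1=q_1=p_2=q_2=0$, so $F(p_1,q_1)=F(p_2,q_2)=\lambda(X^2)$ and there is nothing to prove.

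\textbf{Case $s>0$, $Y$ positive definite.} Set $Z:=Y^{s}$, which is positive definite, and $t_i:=p_i/s\in[0,1]$. Since $q_i=s-p_i$, we have
\[
Y^{p_i}XY^{q_i}X=Z^{t_i}XZ^{1-t_i}X,
\]
so $F(p_i,q_i)=\lambda(Z^{t_i}XZ^{1-t_i}X)$. Dividing the hypothesis $|p_1-s/2|\le|p_2-s/2|$ by $s>0$ gives $|t_1-\tfrac12|\le|t_2-\tfrac12|$. Theorem \ref{thm:monotonicity}, applied to the pair $(Z,X)$, then yields $F(p_1,q_1)\prec_{\log}F(p_2,q_2)$ directly.

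\textbf{Case $s>0$, $Y$ only positive semi-definite.} Apply the previous case to $Y_\varepsilon:=Y+\varepsilon I_n>0$ and let $\varepsilon\to0^+$. Because $p_i,q_i\ge 0$, the maps $\varepsilon\mapsto Y_\varepsilon^{p_i}$ and $\varepsilon\mapsto Y_\varepsilon^{q_i}$ are continuous up to $\varepsilon=0$, with $Y^0:=I_n$ if an exponent vanishes. Hence $Y_\varepsilon^{p_i}XY_\varepsilon^{q_i}X\to Y^{p_i}XY^{q_i}X$.

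These matrices have nonnegative real eigenvalues: by cyclicity they are the squared singular values of $Y^{q_i/2}XY^{p_i/2}$. Those singular values depend continuously on the matrix, so the ordered eigenvalue vectors converge. Therefore each partial product inequality
\[
\prod_{j\le k}\lambda_j\bigl(F(p_1,q_1)\bigr)\le\prod_{j\le k}\lambda_j\bigl(F(p_2,q_2)\bigr)
\]
survives the limit. The determinant equality also passes to the limit, since for each $\varepsilon$ both determinants equal $\det(Y_\varepsilon)^{s}\det(X)^2$.

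\textbf{Main obstacle.} The only delicate point is this limiting step, specifically verifying that eigenvalues converge as an ordered vector. This is handled by rewriting the eigenvalues as squared singular values, exactly as in the proof of Theorem \ref{thm:wlog-main}. The remaining steps are routine rescaling.
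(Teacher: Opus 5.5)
Your proof is correct and matches the paper's argument: the rescaling $Z=Y^{s}$, $t_i=p_i/s$ reduces the claim directly to Theorem \ref{thm:monotonicity}, exactly as the paper does. Your extra $\varepsilon$-regularization step adds rigor the paper lacks. Theorem \ref{thm:monotonicity} assumes $Y$ positive definite, while the corollary allows any $Y\in\mathbb{M}_n^+$, and the paper's proof skips this point. Your limiting argument closes the gap correctly, using continuity of the squared singular values of $Y^{q_i/2}XY^{p_i/2}$ under the convention $Y^0=I_n$ together with the constant determinant.
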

	
	\begin{proof}
		The case $s=0$ is trivial, so we assume $s>0$. For $i=1,2$, we set $t_i=\frac{p_i}{s}\in[0,1].$ Since $p_i+q_i=s$, we have
		$q_i=s(1-t_i)$, and therefore
		\[
		F(p_i,q_i)
		=\lambda\!\left((Y^s)^{t_i}X(Y^s)^{1-t_i}X\right).
		\]
		
		Now, substituting $Y$ with $Y^s$ in \Cref{thm:monotonicity} yields the required log-majorization inequality for
		\[
		\left|p_1-\frac{s}{2}\right|
		\le
		\left|p_2-\frac{s}{2}\right|, 
		\]
		as required.
	\end{proof}

	The next result extends the monotonicity theorem to products involving negative exponents. This corresponds to the case $t \notin [0,1]$ in \Cref{thm:monotonicity}, and it may be viewed as a quantitative sharpening of Theorem \ref{thm:extensions}(2): not only does the negative-exponent product dominate $\lambda(X^* Y^{p-q} X)$ in the log-majorization order, but this domination grows monotonically as the exponents move away from the balanced configuration. 
	
	\begin{cor}\label{cor:negative-monotonicity}
		Let $Y$ be positive definite and $X$ be Hermitian. For $p_1,q_1,p_2,q_2>0$ with $r = p_1-q_1 = p_2-q_2 \neq 0$, if $p_1 \le p_2$, then
		\begin{equation}
			F(p_1,-q_1) \prec_{\log} F(p_2,-q_2). \label{31}
		\end{equation}
	\end{cor}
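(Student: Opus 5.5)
Fix $r=p_1-q_1=p_2-q_2\neq 0$. The plan is to reduce to Theorem \ref{thm:monotonicity} by writing $Y^{p_i}XY^{-q_i}X$ in the form $(Y^r)^{t_i}X(Y^r)^{1-t_i}X$. Setting $t_i=p_i/r$ gives $r t_i=p_i$ and $r(1-t_i)=r-p_i=-q_i$, so
\[
F(p_i,-q_i)=\lambda\bigl((Y^{r})^{t_i}X(Y^{r})^{1-t_i}X\bigr),\qquad i=1,2 .
\]
Since $Y>0$, the matrix $Y^r$ is positive definite for every real $r$, including $r<0$. Theorem \ref{thm:monotonicity} therefore applies with $Y$ replaced by $Y^r$. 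It yields \eqref{31} as soon as
\[
\Bigl|t_1-\tfrac12\Bigr|\le\Bigl|t_2-\tfrac12\Bigr|.
\]

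The rest is the elementary check of this inequality, split by the sign of $r$.

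\emph{Case $r>0$.} Here $p_i=r+q_i>r$, so $t_i=p_i/r>1$. Both $t_i$ lie in $(1,\infty)$, and $|t_i-\tfrac12|=t_i-\tfrac12$. The map $p\mapsto p/r$ is increasing, so $p_1\le p_2$ gives $t_1\le t_2$, which is the required inequality.

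\emph{Case $r<0$.} Here $t_i=p_i/r<0$, so $|t_i-\tfrac12|=\tfrac12-t_i=\tfrac12+p_i/|r|$. This is increasing in $p_i$, so again $p_1\le p_2$ gives the required inequality.

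The one point needing care is the sign of $r$. When $r<0$, the roles of the "left" and "right" exponents are swapped relative to $Y$. Passing to $Y^r$ with $t_i$ negative handles this, because Theorem \ref{thm:monotonicity} is stated for all real $t$, and that relies on Proposition \ref{prop:convexity-R} being valid on all of $\mathbb{R}$. For the same reason $Y$ must be positive definite rather than merely semi-definite: negative powers of $Y$ have to make sense. Beyond this there is no real obstacle. The statement is a reparametrization of Theorem \ref{thm:monotonicity}, in the same way that Corollary \ref{cor:pq-monotonicity} is.
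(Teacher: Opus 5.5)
Your proof is correct and matches the paper's argument. Both set $t_i = p_i/r$, replace $Y$ by the positive definite matrix $Y^r$, split on the sign of $r$ to check $\bigl|t_1-\tfrac12\bigr|\le\bigl|t_2-\tfrac12\bigr|$, and then invoke Theorem \ref{thm:monotonicity}; your formula $\bigl|t_i-\tfrac12\bigr|=\tfrac12+p_i/|r|$ for $r<0$ just spells out what the paper leaves implicit when it notes $t_1\ge t_2$ with $t_i\le\tfrac12$.
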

	
	\begin{proof}
		Set $t_i = p_i/r$, so $1-t_i = -q_i/r$. Then
		\[
		F(p_i,-q_i) = \lambda\big( (Y^r)^{t_i} X (Y^r)^{1-t_i} X \big).
		\]
		If $r>0$, then $t_i > 1$, and $p_1 \le p_2$ gives $t_1 \le t_2$. Since $t_i \ge 1/2$, \Cref{thm:monotonicity} applied to $Y^r$ yields the result. Similarly, if $r<0$, then $t_i < 0$ and $p_1 \le p_2$ implies $t_1 \ge t_2$. Since $t_i \le 1/2$, \Cref{thm:monotonicity} applied to $Y^r$ again, establishing \eqref{31}.
	\end{proof}
	
	\section{Determinantal inequalities for normal matrices}\label{sec:det}
	
	We now turn to a concrete application of the log-majorization machinery developed in the preceding section. The starting point is a line of determinantal inequalities initiated by Audenaert \cite{Audenaert}, who proved that for $A,B\geq 0$,
	\begin{equation}\label{eq:audenaert}
		\det(A^2+|BA|) \leq \det(A^2+AB),
	\end{equation}
	motivated by problems in diffusion tensor imaging. M. Lin \cite{Lin} generalized this to
	\begin{equation}\label{eq:lin}
		\det(A^2+|BA|^p) \leq \det(A^2+A^pB^p),\qquad 0\leq p\leq 2,
	\end{equation}
	and proposed several open problems including
	\begin{equation}\label{conj}
		\det(A^2+|AB|^p) \geq \det(A^2+|BA|^p),\qquad A,B\geq 0,\; p\geq 0.
	\end{equation}
	
	The conjectures generated considerable activity and were eventually confirmed in wider settings \cite{GAM,GAMA}. Further determinantal inequalities of this type can be found in \cite{AG,MAM}.\\
	
	In view of these developments, it is natural to ask how far beyond the Hermitian setting such inequalities remain valid. Using Theorem \ref{thm:extensions}, we show that \eqref{conj} extends to the case where $A$ is an \emph{arbitrary} complex matrix and $B$ is a \emph{normal} matrix, provided the two sides are appropriately symmetrized.
	
	\begin{thm}\label{thm:det-adjoint}
		Let $A\in \mathbb{M}_n$ be arbitrary and let $B\in\mathbb{M}_n$ be a normal matrix. Then for all $p\geq 0$, it holds that
		\[
		\det(A^*A + |BA|^p) \leq \det(AA^* + |A^*B^*|^p).
		\]
	\end{thm}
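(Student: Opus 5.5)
The plan is to reduce the determinantal inequality to a log-majorization supplied by Theorem \ref{thm:extensions}(2), and then apply Lemma \ref{lem:log-det}. First assume that $A$ and $B$ are both invertible. The general case then follows by continuity: replace $A$ by $A+\varepsilon I_n$ and $B$ by $B+\varepsilon I_n$, which is still normal and is invertible for all small $\varepsilon>0$. Both sides are continuous in $(A,B)$ because $M\mapsto |M|^p=(M^*M)^{p/2}$ is continuous for $p>0$. The case $p=0$ (with $|M|^0=I_n$) is trivial, since $\det(A^*A+I_n)=\det(AA^*+I_n)$.

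With $A,B$ invertible, set $Z:=BAA^*B^*>0$. Since $\det(A^*A)=\det(AA^*)$, factoring gives
\[
\det(A^*A+|BA|^p)=\det(A^*A)\det\bigl(I_n+(A^*A)^{-1}(A^*B^*BA)^{p/2}\bigr),
\]
\[
\det(AA^*+|A^*B^*|^p)=\det(AA^*)\det\bigl(I_n+(AA^*)^{-1}Z^{p/2}\bigr).
\]
By Lemma \ref{lem:log-det}, it therefore suffices to show
\[
\lambda\bigl((A^*A)^{-1}(A^*B^*BA)^{p/2}\bigr)\prec_{w\log}\lambda\bigl((AA^*)^{-1}Z^{p/2}\bigr).
\]
Both matrices have nonnegative eigenvalues, since each is similar to a positive semi-definite matrix.

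Next, rewrite both sides. Lemma \ref{lem:algebraic}, applied with $X=A$, $Y=B$ and exponent $p/2$, gives $(A^*B^*BA)^{p/2}=A^*B^*Z^{p/2-1}BA$. Hence
\[
(A^*A)^{-1}(A^*B^*BA)^{p/2}=A^{-1}B^*Z^{p/2-1}BA,
\]
which is similar to $B^*Z^{p/2-1}B$. On the other side, $AA^*=B^{-1}ZB^{-*}$, so $(AA^*)^{-1}=B^*Z^{-1}B$. Thus $(AA^*)^{-1}Z^{p/2}=B^*Z^{-1}BZ^{p/2}$, which has the same eigenvalues as $Z^{p/2}B^*Z^{-1}B$.

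The required relation is now
\[
\lambda\bigl(B^*Z^{p/2-1}B\bigr)\prec_{w\log}\lambda\bigl(Z^{p/2}B^*Z^{-1}B\bigr).
\]
This is exactly Theorem \ref{thm:extensions}(2) with $Y=Z$ (positive definite), $X=B$ (normal), exponents $p/2$ and $q=1$. That result even gives full log-majorization $\lambda(Z^{p/2}B^*Z^{-1}B)\succ_{\log}\lambda(B^*Z^{p/2-1}B)$, which implies the weak form. Lemma \ref{lem:log-det} then finishes the invertible case.

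The step needing the most care is the algebraic reduction: keeping the similarities correct and checking that normality of $B$ is used only through Theorem \ref{thm:extensions}. The perturbation argument is routine, but one must confirm that $B+\varepsilon I_n$ remains normal, which holds because it commutes with its adjoint whenever $B$ does.
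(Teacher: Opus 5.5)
Your proposal is correct and follows essentially the same route as the paper. Both rewrite $|BA|^p$ via Lemma~\ref{lem:algebraic}, apply Theorem~\ref{thm:extensions}(2) with $Y=BAA^*B^*$, $X=B$ and exponents $p/2$ and $1$, and then conclude with Lemma~\ref{lem:log-det} and a continuity argument. You also perturb $B$ (keeping it normal), which is slightly more careful than the paper: the paper perturbs only $A$, yet Lemma~\ref{lem:algebraic} and the inverse $(BAA^*B^*)^{-1}$ need $B$ to be invertible as well.
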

	
	\begin{proof}
		First, assume $A$ is invertible. Then for all $p \geq 0$,
		\begin{align*}
			\lambda\left( (A^*)^{-1} (A^*B^*BA)^{\frac{p}{2}} A^{-1}\right) 
			&= \lambda\left(B^* (BAA^*B^*)^{\frac{p}{2} - 1} B\right) \quad \text{(by \Cref{lem:algebraic})}\\
			&\prec_{\log} \lambda\left((BAA^*B^*)^{\frac{p}{2}} B^* (BAA^*B^*)^{-1} B\right) \quad \text{(by \Cref{thm:extensions}(2))}\\
			&= \lambda\left((BAA^*B^*)^{\frac{p}{2}} (A^*)^{-1}A^{-1}\right)\\
			&= \lambda\left(A^{-1} (BAA^*B^*)^{\frac{p}{2}} (A^*)^{-1}\right).
		\end{align*}
		
		Applying Lemma \ref{lem:log-det} gives
		\[
		\det\left(I_n + (A^*)^{-1} (A^*B^*BA)^{\frac{p}{2}} A^{-1}\right) \leq \det\left(I_n + A^{-1} (BAA^*B^*)^{\frac{p}{2}} (A^*)^{-1}\right).
		\]
		
		Multiplying both sides by $\det(A^*A) > 0$ yields
		\[
		\det(A^*A + |BA|^p) \leq \det(AA^* + |A^*B^*|^p).
		\]
		
		Finally, for $A$ singular, a standard continuity argument applies. There exists $\delta > 0$ such that for all $0 < \epsilon < \delta$, the perturbation $$A_\epsilon = A + \epsilon I_n$$ is invertible. Since the inequality
		\[
		\det(A_\epsilon^* A_\epsilon + |B A_\epsilon|^p) \leq \det(A_\epsilon A_\epsilon^* + |A_\epsilon^* B^*|^p)
		\]
		holds for each $A_\epsilon > 0$, and the functions $(A,B) \mapsto |B A|^p$ and $(A,B)\mapsto |A^* B^*|^p$  for $p \geq 0$ together with the determinant are continuous on $\mathbb{M}_n$, letting $\epsilon \to 0^+$ completes the proof.
	\end{proof}
	
	Specializing \Cref{thm:det-adjoint} to Hermitian $A$ and $B$ recovers the main determinantal result of \cite{GAM}. 
	
	\begin{remark}
		It is worth emphasizing the extent of the generalization. In \cite{GAM}, both $A$ and $B$ were required to be Hermitian. Here, $B$ may range over the entire class of normal matrices, and $A$ is completely arbitrary. The normality assumption on $B$ cannot simply be dropped, as \Cref{ex:normal-needed} confirms.
	\end{remark}
	
	\begin{example}\label{ex:normal-needed}
		Take $p = 2$, and
		\[
		A = \begin{pmatrix} 2 & 0 \\ 0 & 1 \end{pmatrix}, \qquad
		B = \begin{pmatrix} 1 & 1 \\ 0 & 1 \end{pmatrix}.
		\]
		Here $B$ is not normal, and one computes
		\[
		\det(A^*A + |BA|^2) = 20, \qquad \det(AA^* + |A^*B^*|^2) = 17,
		\]
		so the inequality of \Cref{thm:det-adjoint} fails. 
	\end{example}

	\section{Monotonicity for products of weighted geometric means}\label{sec:geom-mean}
	
	In this section, we apply the monotonicity results of Section \ref{sec:main} to derive a new log-majorization inequality for products of weighted geometric means. This provides a second, independent application of our main theorem and demonstrates that its reach extends well beyond determinantal inequalities, into the theory of matrix means.\\
	
	The matrix geometric mean has a rich history. It was first introduced by W. Pusz and S.L. Woronowicz \cite{PW} in 1975, in the general setting of sesquilinear forms, and was subsequently studied in depth by T. Ando \cite{Ando79}, who established its fundamental order-theoretic and variational properties. For positive definite matrices $A,B\in\mathbb{M}_n$, the geometric mean is given explicitly by
	\[
	A\#B = A^{1/2}\big(A^{-1/2}BA^{-1/2}\big)^{1/2}A^{1/2},
	\]
	and it is the unique positive definite solution of the Riccati equation $ZA^{-1}Z = B$. The theory of operator means was developed by F. Kubo and T. Ando \cite{KA}.\\
	
	More generally, for positive definite $A,B$, the \emph{weighted geometric mean} is defined by
	\begin{equation}\label{eq:wgm-def}
		A\#_t B = A^{1/2}\big(A^{-1/2}BA^{-1/2}\big)^{t}A^{1/2} \qquad \text{for  } t\in\mathbb{R},
	\end{equation}
	so that $A\#_0B = A$, $A\#_1B = B$ and $A\#_{1/2}B = A\#B$. The weighted mean carries a beautiful geometric interpretation: the set of positive definite matrices forms a Riemannian manifold of non-positive curvature under the affine-invariant metric, and $t\mapsto A\#_tB$ is precisely the unique geodesic joining $A$ to $B$, with $A\#B$ as its midpoint (see \cite{Bhatia2007,LL}). This geodesic viewpoint has been a driving force in the modern theory of matrix means and their multivariable extensions. \\
	
	Some care is needed regarding the domain of definition. When $A$ and $B$ are merely positive semi-definite, the expression \eqref{eq:wgm-def} is no longer available, since negative powers of singular matrices are undefined. Nevertheless, for the weight range $0\le t\le 1$ the weighted geometric mean extends naturally to all $A,B\in\mathbb{M}_n^+$ via the strong limit
	\[
	A\#_t B := \lim_{\varepsilon\rightarrow 0}\, (A+\varepsilon I_n)\,\#_t\,(B+\varepsilon I_n), \qquad 0\le t\le 1,
	\]
	which exists as a decreasing limit by the monotonicity of operator means \cite{KA}; this is the standard Kubo--Ando construction. For $t\notin[0,1]$, however, the expression \eqref{eq:wgm-def} genuinely requires $A$ and $B$ to be positive definite, and we shall keep this distinction in view throughout.\\
	
	More recently, eigenvalue comparisons for products of geometric means have attracted considerable interest. F. Hiai and M. Lin \cite{MHL} proved the elegant log-majorization
	\begin{equation}
		\lambda((A\#_tB)(A\#_{1-t}B)) \prec_{\log} \lambda(AB), \qquad 0\leq t\leq 1.\label{41}
	\end{equation}
	
	Subsequent work by R. Lemos and G. Soares \cite{LS} established further log-majorization inequalities for matrix connections, while related results for eigenvalues and singular values were obtained in \cite{GAMA2}. The question of how the eigenvalues in \eqref{41} vary as $t$ moves inside the interval had remained open; the following theorem settles it.

	\begin{thm}\label{thm:geom-monotonicity}
		Let $A,B$ be positive definite matrices. Then, for all $t_1, t_2 \in \mathbb{R}$ with
		$\left|t_1-\frac{1}{2}\right|
		\le
		\left|t_2-\frac{1}{2}\right|,$
		\begin{equation}
			\lambda((A\#_{t_1}B)(A\#_{1-t_1}B)) \prec_{\log} \lambda((A\#_{t_2}B)(A\#_{1-t_2}B)).\label{42}
		\end{equation}
	\end{thm}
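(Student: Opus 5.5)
We reduce the product of weighted geometric means to the form $Y^{t}XY^{1-t}X$ treated in Theorem \ref{thm:monotonicity}, with $X$ positive definite and $Y$ positive definite. Set $C := A^{-1/2}BA^{-1/2}$, which is positive definite. By \eqref{eq:wgm-def}, $A\#_tB = A^{1/2}C^{t}A^{1/2}$ for every $t\in\mathbb{R}$. Hence
\[
(A\#_{t}B)(A\#_{1-t}B) = A^{1/2}C^{t}A\,C^{1-t}A^{1/2}.
\]
This product is similar to $C^{t}AC^{1-t}A$, by conjugation with $A^{1/2}$. Therefore
\[
\lambda\big((A\#_{t}B)(A\#_{1-t}B)\big) = \lambda\big(C^{t}AC^{1-t}A\big), \qquad t\in\mathbb{R}.
\]
This is exactly $\lambda(Y^{t}XY^{1-t}X)$ with $Y:=C$ positive definite and $X:=A$ Hermitian (indeed positive definite).

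Next we apply Theorem \ref{thm:monotonicity} with this choice of $Y$ and $X$. It gives, whenever $|t_1-\tfrac12|\le|t_2-\tfrac12|$,
\[
\lambda(C^{t_1}AC^{1-t_1}A)\prec_{\log}\lambda(C^{t_2}AC^{1-t_2}A).
\]
Combined with the similarity identity above, this is precisely \eqref{42}.

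The only points needing care are the following two.

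\emph{Similarity step.} The similarity must preserve the full spectrum, not just be a cyclic rearrangement that could lose multiplicities. Here it is a genuine similarity, since $A^{1/2}$ is invertible, so the full spectrum is preserved. The eigenvalues are nonnegative because $C^{t}AC^{1-t}A$ is similar to the positive definite matrix $(C^{t/2}A C^{(1-t)/2})^*(C^{t/2}AC^{(1-t)/2})$.

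\emph{Determinant condition.} Full log-majorization, not merely weak, requires equality of determinants. This is automatic, since
\[
\det(C^{t}AC^{1-t}A)=\det(C)\det(A)^2
\]
is independent of $t$. This is already built into Theorem \ref{thm:monotonicity}.

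We expect no real obstacle. The main thing to verify is that Theorem \ref{thm:monotonicity} is stated for all real $t$, so that the case $t_1$ or $t_2\notin[0,1]$ (where $C^{t}$ with negative exponent is used) is covered. It is, because $Y=C$ is positive definite.

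As consequences, taking $t_1=\tfrac12$ yields the minimality of $\lambda((A\#B)^2)$. Taking $t_2=0$ recovers \eqref{41} for $0\le t\le1$ and its reversal for $t\notin[0,1]$.
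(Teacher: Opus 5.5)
Your proposal is correct and is essentially the paper's proof. Both arguments set $C=A^{-1/2}BA^{-1/2}$, use $A\#_tB=A^{1/2}C^{t}A^{1/2}$ to identify $\lambda((A\#_tB)(A\#_{1-t}B))$ with $\lambda(C^{t}AC^{1-t}A)$, and then invoke Theorem~\ref{thm:monotonicity} with $Y=C$ and $X=A$. Your extra checks are accurate and only make explicit what the paper leaves implicit: the genuine similarity via $A^{1/2}$, the nonnegativity of the spectrum, and the $t$-independence of the determinant.
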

	
	\begin{proof}
		The key observation is that the eigenvalues of the product $(A\#_tB)(A\#_{1-t}B)$ can be written in the form $Y^t X Y^{1-t} X$ with $X$ positive definite. Indeed, setting $Y = A^{-1/2} B A^{-1/2}$, we have
		\[
		A\#_tB = A^{1/2} Y^t A^{1/2},
		\]
		and hence
		\[
		\lambda((A\#_tB)(A\#_{1-t}B)) = \lambda(A^{1/2} Y^t A^{1/2} \cdot A^{1/2} Y^{1-t} A^{1/2}) = \lambda(Y^t A Y^{1-t} A).
		\]
		Applying \Cref{thm:monotonicity} with $X = A$ and $Y = A^{-1/2}BA^{-1/2}$, gives precisely \eqref{42}. 
	\end{proof}
	
	\vspace{0.4cm}
	
	The next corollary shows that the balanced geometric mean product $(A\#B)^2$ is the unique minimal element in the log-majorization order among all products $(A\#_tB)(A\#_{1-t}B)$, while $AB$ serves as the upper endpoint for $0\le t\le 1$.
	
	\begin{cor}
		Let $A,B$ be positive definite matrices. Then:
		\begin{enumerate}
			\item[(1)] For all $0\le t \le 1$,
			\[
			\lambda((A\#B)^{2}) \prec_{\log}  \lambda((A\#_{t}B)(A\#_{1-t}B)) \prec_{\log} \lambda(AB).
			\]
			
			\item[(2)] For all $t \notin [0,1]$,
			\[
			\lambda((A\#B)^{2}) \prec_{\log} \lambda(AB) \prec_{\log} \lambda((A\#_{t}B)(A\#_{1-t}B)).
			\]
		\end{enumerate}
	\end{cor}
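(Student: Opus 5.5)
The plan is to read off both parts of the corollary from Theorem~\ref{thm:geom-monotonicity} by choosing suitable comparison weights, together with the identification of the endpoint products. Setting $t=1/2$ gives $(A\#_{1/2}B)(A\#_{1/2}B)=(A\#B)^2$. Setting $t=0$ (or $t=1$) gives $(A\#_0B)(A\#_1B)=AB$; for $t=1$ it gives $BA$, which has the same eigenvalues as $AB$. These identities follow directly from \eqref{eq:wgm-def}: $A\#_0B=A$ and $A\#_1B=B$.

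For part (1), fix $0\le t\le 1$. Since $0\le |t-\tfrac12|\le \tfrac12=|0-\tfrac12|$, two applications of Theorem~\ref{thm:geom-monotonicity} suffice. The first takes $t_1=1/2$ and $t_2=t$, and gives the left inequality $\lambda((A\#B)^2)\prec_{\log}\lambda((A\#_tB)(A\#_{1-t}B))$. The second takes $t_1=t$ and $t_2=0$, and gives the right inequality $\lambda((A\#_tB)(A\#_{1-t}B))\prec_{\log}\lambda(AB)$. The right inequality also follows from Theorem~\ref{thm:wlog-main}, by the same reduction $Y=A^{-1/2}BA^{-1/2}$ and $X=A$ used in the proof of Theorem~\ref{thm:geom-monotonicity}.

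For part (2), fix $t\notin[0,1]$. Then $|t-\tfrac12|>\tfrac12=|0-\tfrac12|\ge 0$. Applying Theorem~\ref{thm:geom-monotonicity} with the pair $(t_1,t_2)=(0,t)$ gives $\lambda(AB)\prec_{\log}\lambda((A\#_tB)(A\#_{1-t}B))$. Applying it with the pair $(t_1,t_2)=(1/2,0)$ gives $\lambda((A\#B)^2)\prec_{\log}\lambda(AB)$.

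I expect no serious obstacle. The only care needed is to confirm that Theorem~\ref{thm:geom-monotonicity} genuinely covers all real $t_1,t_2$. It does, because $A$ and $B$ are positive definite, so $Y=A^{-1/2}BA^{-1/2}$ is positive definite and Theorem~\ref{thm:monotonicity} applies on all of $\mathbb{R}$. One should also check that full log-majorization, with equality of determinants, is preserved. This holds because every product in question has determinant $\det(A)\det(B)$.
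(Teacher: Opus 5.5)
Your proposal is correct and takes essentially the paper's approach: both chains are read off Theorem~\ref{thm:geom-monotonicity} using the comparison weights $1/2$ and $0$, where the paper uses $t_1=1$ in (2), which gives $BA$; the one real difference is that for the upper bound in (1) the paper cites the Hiai--Lin inequality \eqref{41}, whereas you take $(t_1,t_2)=(t,0)$ in the same theorem, which makes the argument self-contained. Your ordering $(t_1,t_2)=(1/2,t)$ for the lower bound is also the correct one (the paper's text has these labels swapped), and you state explicitly the step $\lambda((A\#B)^2)\prec_{\log}\lambda(AB)$ in (2), which the paper leaves implicit.
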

	
	\begin{proof}
		First we take $t_1 = t$ and $t_2 = 1/2$ in \Cref{thm:geom-monotonicity}. This implies
		\[
		\lambda((A\#B)^2) \prec_{\log} \lambda((A\#_tB)(A\#_{1-t}B)), \qquad t\in\mathbb{R}.
		\]
		Combining this with the Hiai--Lin theorem \eqref{41} for $t\in[0,1]$ gives 
		\[
		\lambda((A\#B)^2) \prec_{\log} \lambda((A\#_tB)(A\#_{1-t}B)) \prec_{\log} \lambda(AB).
		\]
		
		For (2), we substitute $t_1 = 1$ and $t_2 = t$ in \Cref{thm:geom-monotonicity}. This yields the full chain, that is, for $t\notin [0,1]$, we have
		\[
		\lambda((A\#B)^2) \prec_{\log} \lambda(AB) \prec_{\log} \lambda((A\#_tB)(A\#_{1-t}B)).
		\]
	\end{proof}

	Geometrically, the further the pair of weights $(t,1-t)$ drifts from the midpoint of the geodesic joining $A$ to $B$, the larger the eigenvalue products become in the log-majorization order. This is a matrix-mean analogue of the classical valley phenomenon for Heinz means recalled in Section \ref{sec:main}. In particular, this extends the trace inequality obtained by Bhatia, Lim and Yamazaki \cite{BLY},
	\[
	\operatorname{tr}((A\#_tB)(A\#_{1-t}B)) \leq \operatorname{tr}(AB) \ \ \ \ \ \text{for every} \ \ 0\leq t \leq 1. 
	\]
	
	The inequality is thereby upgraded from a trace inequality to a log-majorization, sharpened by the identification of $t = 1/2$ as the exact minimizer, and reversed for $t\notin [0,1]$.\\
	
	Our next result, which presents the final main contribution of this section, provides a complete comparison between the product of two arbitrary weighted geometric means and a suitable power product of the endpoint matrices. This theorem includes the Hiai--Lin inequality as a special case and offers a unified framework for understanding how the eigenvalues of $(A\#_{t_1}B)(A\#_{t_2}B)$ behave across the entire parameter square $[0,1]\times[0,1]$.\\
	
	The following lemma, due to \cite{AG}, is a key ingredient in our proof.
	
	\begin{lem} \label{Lemma 4.1}
		Let $A$, $B$ be two positive semi-definite matrices with $A$ invertible. Then for all $0 \le t\le 1$ and for all $k \ge 1$, $$\lambda(A^{kt/2}(A^{-1/2}BA^{-1/2})^t A^{kt/2}) \prec_{\log} \lambda(A^{(k-1)t} B^t).$$
	\end{lem}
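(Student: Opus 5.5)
The plan is to use the same scheme as in the proof of \Cref{thm:wlog-main}: reduce to the largest eigenvalue via antisymmetric tensor powers, check equality of determinants, and handle the largest eigenvalue by a Löwner-order argument, which here will be the Furuta inequality. Write $C:=A^{-1/2}BA^{-1/2}$. Since $\wedge^k$ is multiplicative, commutes with adjoints and real powers of positive semi-definite matrices, and $\wedge^k A$ is invertible when $A$ is, both sides are compatible with exterior powers. Both matrices have non-negative eigenvalues, because each is similar to a positive semi-definite matrix. Thus it suffices to prove
\[
\lambda_1\bigl(A^{kt/2}C^tA^{kt/2}\bigr)\le \lambda_1\bigl(A^{(k-1)t}B^t\bigr)=\lambda_1\bigl(A^{(k-1)t/2}B^tA^{(k-1)t/2}\bigr),
\]
together with the determinant identity
\[
\det(A^{kt}C^t)=\det A^{kt}\,(\det B)^t(\det A)^{-t}=\det A^{(k-1)t}\det B^t,
\]
which holds trivially (both sides vanish) when $B$ is singular.

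For the $\lambda_1$ inequality I will use homogeneity. Replacing $B$ by $cB$ scales both sides by $c^t$, so I may assume $\lambda_1(A^{(k-1)t/2}B^tA^{(k-1)t/2})=1$, that is, $B^t\le A^{-(k-1)t}$. The goal then becomes $A^{kt/2}C^tA^{kt/2}\le I$, equivalently $C^t\le A^{-kt}$. The case $t=0$ is trivial, so take $t\in(0,1]$. If $k=1$, there is nothing to normalize: the claim is $\lambda(A^{t/2}C^tA^{t/2})\prec_{\log}\lambda((A^{1/2}CA^{1/2})^t)=\lambda(B^t)$, which is exactly the Araki–Lieb–Thirring log-majorization. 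So assume $k>1$.

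The key step is the Furuta inequality. Set $P:=A^{-(k-1)t}\ge Q:=B^t\ge 0$ and $r:=\frac{1}{(k-1)t}>0$, so that $P^{r}=A^{-1}$. Then
\[
C=P^{r/2}Q^{1/t}P^{r/2}.
\]
Furuta's theorem states that $P\ge Q\ge0$ implies $(P^{r/2}Q^{p}P^{r/2})^{1/q}\le (P^{r/2}P^{p}P^{r/2})^{1/q}$ whenever $r\ge0$, $p\ge0$, $q\ge1$ and $(1+r)q\ge p+r$. I apply it with $p=1/t$ and $q=1/t$. The side condition $(1+r)/t\ge 1/t+r$ reduces to $r/t\ge r$, i.e.\ $t\le1$, which holds. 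This yields
\[
C^t\le P^{(r+1/t)t}=A^{-t}A^{-(k-1)t}=A^{-kt},
\]
as required. Applying the same argument to $(\wedge^jA,\wedge^jB)$ for each $j$ gives the weak log-majorization, and the determinant identity upgrades it to $\prec_{\log}$.

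The main obstacle I anticipate is identifying the right Löwner-order tool. A more naive chain, Araki–Lieb–Thirring applied to $A^{k/2}CA^{k/2}$, produces $\lambda((A^{(k-1)/2}BA^{(k-1)/2})^t)$. That quantity dominates the target $\lambda(A^{(k-1)t/2}B^tA^{(k-1)t/2})$ rather than being dominated by it, so this route points the wrong way for $k>1$. Furuta's inequality, with the exponent bookkeeping above, is the step that makes the argument work, and the constraint $t\le1$ enters exactly through Furuta's condition $(1+r)q\ge p+r$. Some care is also needed with a singular $B$. Furuta's inequality only requires $Q\ge0$, so no perturbation argument should be necessary, but if needed one can replace $B$ by $B+\varepsilon I$ and pass to the limit by continuity of eigenvalues.
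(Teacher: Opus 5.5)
The paper does not prove this lemma. It is quoted from \cite{AG} and used only in Case~1 of Theorem~\ref{Theorem 4.2}, so there is no in-text argument to match yours against. Taken on its own, your proof is correct and complete.

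The bookkeeping checks out. With $P=A^{-(k-1)t}\ge Q=B^t$ and $r=1/((k-1)t)$ you have $P^{r/2}=A^{-1/2}$ and $Q^{1/t}=B$, so $C=P^{r/2}Q^{1/t}P^{r/2}$. Furuta with $p=q=1/t$ is admissible exactly because $q\ge1$ and $(1+r)q\ge p+r$ both reduce to $t\le 1$, and $P^{1+rt}=A^{-kt}$. The exterior-power reduction and the determinant identity are right. Since Furuta needs only $Q\ge 0$, a singular $B$ requires no perturbation. Two minor points: the normalization presumes $B\neq 0$ (otherwise both sides vanish), and the case $k=1$ does not need Araki--Lieb--Thirring as a black box. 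In normalized form it is just $B^t\le I\Rightarrow B\le I\Rightarrow C\le A^{-1}\Rightarrow C^t\le A^{-t}$ by L\"owner--Heinz.

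The instructive comparison is with the argument the paper does write out, Case~2 of Theorem~\ref{Theorem 4.2}. It follows your template: normalize, prove a L\"owner-order implication, lift with $\wedge^r$, match determinants.

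\begin{itemize}
\item In Case~2 the normalized hypothesis is $B^s\le A^{s-2}$ with $s\ge 1$. L\"owner--Heinz with exponent $1/s\le 1$ strips the power, and the rest is L\"owner--Heinz again.
\item In the lemma the normalized hypothesis is $B^t\le A^{-(k-1)t}$ with $t\le 1$. Stripping the power would need the exponent $1/t\ge 1$, which L\"owner--Heinz forbids. This is the same obstruction you found in the Araki--Lieb--Thirring route, and Furuta's inequality is the right tool to get past it.
\end{itemize}

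With $k=2/s$ and $t=s$, the normalized lemma reads $B^s\le A^{s-2}\Rightarrow C^s\le A^{-2}$. This is exactly what the range $s=t_1+t_2\le 1$ in Theorem~\ref{Theorem 4.2} requires. So your argument makes the paper self-contained at the cost of one extra classical inequality, and it explains why the case split in Theorem~\ref{Theorem 4.2} sits at $s=1$.
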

	
	For the sake of simplicity, we shall use the notation
	\[
	C := A^{-\frac{1}{2}} B A^{-\frac{1}{2}},
	\]
	whenever $A$ and $B$ are fixed positive definite matrices in $\mathbb{M}_n$.	
	
	\begin{thm}\label{Theorem 4.2}
		Let $A$ and $B$ be two $n\times n$ positive semi-definite matrices. Then $$\boldsymbol{\lambda}\big{(} (A\#_{t_1}B)(A\#_{t_2}B) \big{)} \prec_{log} \boldsymbol{\lambda}\big( A^{2 - (t_1 + t_2)} B^{t_1 + t_2}\big), \hspace{1cm} 0\leq t_1, t_2\leq 1.$$
	\end{thm}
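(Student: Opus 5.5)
The plan is to reduce the product of two weighted means to an interlaced product $Y^{t_1}X^*Y^{t_2}X$. Then Theorem \ref{thm:extensions}(1) removes the interlacing, and Lemma \ref{Lemma 4.1} converts the result into the power product $A^{2-s}B^{s}$, where $s:=t_1+t_2\in[0,2]$.

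First assume $A,B$ are positive definite and put $C=A^{-1/2}BA^{-1/2}$. Since $A\#_tB=A^{1/2}C^tA^{1/2}$, we get
\[
\lambda\big((A\#_{t_1}B)(A\#_{t_2}B)\big)=\lambda\big(A^{1/2}C^{t_1}AC^{t_2}A^{1/2}\big)=\lambda\big(C^{t_1}AC^{t_2}A\big).
\]
Apply Theorem \ref{thm:extensions}(1) with $Y=C$, the Hermitian (hence normal) matrix $X=A$, and $p=t_1$, $q=t_2$. This gives
\[
\lambda(C^{t_1}AC^{t_2}A)\prec_{\log}\lambda(AC^{s}A).
\]
It remains to compare $\lambda(AC^sA)$ with $\lambda(A^{2-s}B^s)$.

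Suppose $0<s\le 1$ (the case $s=0$ is trivial, since both sides reduce to $\lambda(A^2)$). Apply Lemma \ref{Lemma 4.1} with $t=s$ and $k=2/s\ge 2$. Then $A^{kt/2}=A$ and $(k-1)t=2-s$, so
\[
\lambda(AC^sA)\prec_{\log}\lambda(A^{2-s}B^s).
\]
Chaining this with the previous step, using transitivity of $\prec_{\log}$, settles the case $s\le1$.

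Now suppose $1<s\le 2$. Here Lemma \ref{Lemma 4.1} cannot be used directly with $t=s>1$. Instead I will use the symmetry $A\#_tB=B\#_{1-t}A$, so that
\[
(A\#_{t_1}B)(A\#_{t_2}B)=(B\#_{1-t_1}A)(B\#_{1-t_2}A).
\]
The new weights $1-t_1,1-t_2\in[0,1]$ sum to $2-s<1$. The case already proved, with the roles of $A$ and $B$ exchanged, gives log-majorization by $\lambda(B^{s}A^{2-s})$. This equals $\lambda(A^{2-s}B^s)$ because $\lambda(XY)=\lambda(YX)$. I expect this splitting into $s\le1$ and $s>1$ to be the main point needing care, since the exponent restriction $t\le1$ in Lemma \ref{Lemma 4.1} is exactly what forces the swap.

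Finally, for merely positive semi-definite $A,B$, replace them by $A+\varepsilon I_n$ and $B+\varepsilon I_n$. For $0\le t\le1$ the weighted means converge to the Kubo--Ando means as $\varepsilon\to0$, and the power products converge as well. Eigenvalues depend continuously on the matrix, so each partial-product inequality $\prod_{i\le k}\lambda_i$ passes to the limit. The determinant equality at $k=n$ also passes to the limit, yielding the full log-majorization.
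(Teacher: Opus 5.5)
Your proof is correct. For $s\le 1$ it is exactly the paper's argument: Theorem \ref{thm:extensions}(1) with $Y=C$, $X=A$, followed by Lemma \ref{Lemma 4.1} with $t=s$, $k=2/s$.

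For $s>1$ you take a genuinely different and much shorter route. The transpose symmetry $A\#_tB=B\#_{1-t}A$ rewrites the product as $(B\#_{1-t_1}A)(B\#_{1-t_2}A)$, whose weights sum to $2-s<1$. The first case with $A$ and $B$ exchanged, together with $\lambda(B^sA^{2-s})=\lambda(A^{2-s}B^s)$, then finishes the job.

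The paper instead proves the regime $s\ge 1$ from scratch. It normalizes by homogeneity in $B$ so that $A^{1-s/2}B^sA^{1-s/2}\le I_n$. It then derives $B\le A^{1-2/s}$ and $C^{t_i}\le A^{-2t_i/s}$ via L\"owner--Heinz (first with exponent $1/s$, then with $t_1$ and $t_2$), and concludes $C^{t_2/2}AC^{t_1}AC^{t_2/2}\le I_n$. Finally, it upgrades this top-eigenvalue bound to a log-majorization through antisymmetric tensor powers and the determinant identity.

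Your reduction makes explicit that the two regimes are dual to each other, and it makes the second half of the proof essentially free. Its only cost is that the swapped first case needs $B$, not just $A$, to be invertible. You correctly absorb this into the final perturbation $A+\varepsilon I_n$, $B+\varepsilon I_n$, which you also spell out more carefully than the paper does. The paper's argument, by contrast, is self-contained for $s\ge 1$, needs only $A$ invertible, and does not rely on Theorem \ref{thm:extensions} or Lemma \ref{Lemma 4.1} in that range.
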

	
	\begin{proof}  
		We assume that $A$ is a positive definite matrix, and the general case can then be deduced by a continuity argument. In addition, we divide the proof into two cases. \begin{enumerate}
			\item[\underline{Case 1:}] If $0\leq t_1 + t_2 \leq 1$, then a direct computation shows \begin{align*} \boldsymbol{\lambda}\big( (A\#_{t_1}B)(A\#_{t_2}B) \big) &= \boldsymbol{\lambda}\left( A^{\frac{1}{2}} (A^{-\frac{1}{2}} B A^{-\frac{1}{2}})^{t_1} A (A^{-\frac{1}{2}} B A^{-\frac{1}{2}})^{t_2} A^{\frac{1}{2}} \right)\\
				&= \boldsymbol{\lambda}\left( A (A^{-\frac{1}{2}} B A^{-\frac{1}{2}})^{t_1} A (A^{-\frac{1}{2}} B A^{-\frac{1}{2}})^{t_2} \right)\\
				&\prec_{log} \boldsymbol{\lambda}\left( A^2 (A^{-\frac{1}{2}} B A^{-\frac{1}{2}})^{t_1 + t_2} \right)\\
				&= \boldsymbol{\lambda}\left( (A^{\frac{1}{2}})^2 (A^{-\frac{1}{2}} B A^{-\frac{1}{2}})^{t_1 + t_2} (A^{\frac{1}{2}})^2\right)\\
				&\prec_{log} \boldsymbol{\lambda}\left( A^{2 - (t_1 + t_2)} B^{t_1 + t_2} \right).
			\end{align*}
			
			Here, the first inequality follows from \Cref{thm:extensions}(1) by replacing $X$ and $Y$ with $A$ and $A^{-\frac{1}{2}} B A^{-\frac{1}{2}}$, respectively, and the last inequality follows from taking $0\leq t = t_1 + t_2 \leq 1$ and $k = \frac{2}{t_1 + t_2} > 1$ in \Cref{Lemma 4.1}.\vspace{0.3cm}
			
			\item[\underline{Case 2:}] Let $0\leq t_1, t_2\leq 1$ such that $s= t_1 + t_2 \geq 1$. One checks that \[\lambda_1\left( A^{2 - s} B^{s}\right) = \lambda_1\left( A^{1 - \frac{s}{2}} B^{s} A^{1 - \frac{s}{2}}\right) \ \ \text{and} \ \ \lambda_1\left[ (A\#_{t_1}B)(A\#_{t_2}B) \right] = \lambda_1( C^{\frac{t_2}{2}} A C^{t_1} A C^{\frac{t_2}{2}}).\]
			
			Both sides are homogeneous of degree $s$ in $B$, so we may normalize $\lambda_1\left( A^{1 - \frac{s}{2}} B^{s} A^{1 - \frac{s}{2}}\right) \leq 1$; it then suffices to show the implication $$A^{1 - \frac{s}{2}} B^{s} A^{1 - \frac{s}{2}} \leq I_n \Rightarrow C^{\frac{t_2}{2}} A C^{t_1} A C^{\frac{t_2}{2}} \leq I_n.$$
			
			Assume then that $A^{1 - \frac{s}{2}} B^{s} A^{1 - \frac{s}{2}} \leq I_n$. This implies that $$B^{s} \leq A^{s - 2}.$$
			
			By L\"owner--Heinz inequality for $0\leq \frac{1}{s}\leq 1$, we obtain \begin{equation} B \leq A^{1 - \frac{2}{s}}.\label{4.4.1}\end{equation}
			
			Next, conjugating both sides of \eqref{4.4.1} with $A^{-\frac{1}{2}} > 0$ yields $$C \leq A^{-\frac{2}{s}},$$ and by appealing again to L\"owner--Heinz inequality this time  for $0\leq t_1\leq 1$ and $0\leq t_2\leq 1$, respectively, we obtain \begin{equation} C^{t_1} \leq A^{-\frac{2t_1}{s}},\label{4.4.2}\end{equation} and \begin{equation}C^{t_2} \leq A^{-\frac{2t_2}{s}}.\label{4.4.3}
			\end{equation}
			
			Now, observe that  \begin{align*} C^{t_2/2} A C^{t_1} A C^{t_2/2} &\leq C^{t_2/2} A A^{-\frac{2t_1}{s}} A C^{t_2/2} \ \ \ \ \ \ \ \ \ \text{(by using \eqref{4.4.2})}\\
				&= C^{t_2/2} A^{\frac{2t_2}{s}} C^{t_2/2}\\
				&\leq I_n \hspace{3cm} \ \ \ \ \ \ \ \ \ \text{(by using \eqref{4.4.3})}. \end{align*}
			
			Therefore, for all $0\leq t_1, t_2\leq 1$ such that $s \geq 1$, \begin{equation}\lambda_1\left[ (A\#_{t_1}B)(A\#_{t_2}B) \right] \leq \lambda_1\left( A^{2 - s} B^{s}\right).\label{4.4.4}\end{equation}
			
			Using the anti-symmetric tensor product, we have for $1\leq r\leq n$ $$\wedge^r\left( A^{2 - s} B^{s}\right) = \left(\wedge^r A\right)^{2 - s} \left(\wedge^r B\right)^{s},$$ and $$\wedge^r\left[ (A\#_{t_1}B)(A\#_{t_2}B) \right] = \left(\wedge^r A \#_{t_1} \wedge^r B\right) \left(\wedge^r A \#_{t_2} \wedge^r B\right).$$
			
			Replacing $A$ and $B$ with $\wedge^r A$ and $\wedge^r B$, respectively, in \eqref{4.4.4} gives $$\lambda_1\left[ \left(\wedge^r A \#_{t_1} \wedge^r B\right) \left(\wedge^r A \#_{t_2} \wedge^r B\right)\right] \leq \lambda_1\left[ \left(\wedge^r A\right)^{2 - s} \left(\wedge^r B\right)^{s}\right].$$
			
			This is equivalent to $$\lambda_1\left[ \wedge^r\big( (A\#_{t_1}B)(A\#_{t_2}B) \big)\right] \leq \lambda_1\left[ \wedge^r\left( A^{2 - s} B^{s}\right)\right].$$
			
			So, for all $1\leq r\leq n$, we have $$\prod\limits_{i=1}\limits^r \lambda_i\left[ (A\#_{t_1}B)(A\#_{t_2}B) \right] \leq \prod\limits_{i=1}\limits^r \lambda_i( A^{2 - s} B^{s}).$$
			
			To conclude, observe that $\det\left[ (A\#_{t_1}B)(A\#_{t_2}B) \right] = \det( A^{2 - s} B^{s})$.\\
			
			This completes Case 2, and with it the proof.
		\end{enumerate}
	\end{proof}

	\section*{Declaration of Competing Interest}
	The author declares that there is no competing interest.

\end{document}